\theoremstyle{plain}
\newtheorem{thm}{Theorem}[section]
\newtheorem{theorem}[thm]{Theorem}
\newtheorem{lemma}[thm]{Lemma}
\newtheorem{set-up}[thm]{Set-up}
\theoremstyle{definition}
\newtheorem{remark}[thm]{Remark}
\newtheorem{example}[thm]{Example}
\numberwithin{equation}{section}
\newcommand{\sA}{{\mathcal A}}
\newcommand{\sB}{{\mathcal B}}
\newcommand{\sC}{{\mathcal C}}
\newcommand{\sD}{{\mathcal D}}
\newcommand{\sF}{{\mathcal F}}
\newcommand{\sI}{{\mathcal I}}
\newcommand{\sM}{{\mathcal M}}
\newcommand{\sP}{{\mathcal P}}
\newcommand{\sR}{{\mathcal R}}
\newcommand{\sS}{{\mathcal S}}
\newcommand{\sZ}{{\mathcal Z}}
\newcommand{\C}{{\mathbb C}}
\newcommand{\Q}{{\mathbb Q}}
\newcommand{\R}{{\mathbb R}}
\newcommand{\Z}{{\mathbb Z}}
\title [K3 surface entropy and automorphism groups]{K3 surface entropy and automorphism groups}
\author{Xun Yu}
\address{Center for Applied Mathematics, Tianjin University, 92 Weijin Road, Nankai District,
Tianjin 300072, P. R. China.
}
\email{xunyu@tju.edu.cn}
\begin{document}
\begin{abstract} 
We derive a characterization of the complex projective K3 surfaces which have automorphisms of positive entropy in term of their N\'eron-Severi lattices. Along the way, we classify the projective K3 surfaces of zero entropy with infinite automorphism groups and we determine the projective K3 surfaces of Picard number at least five with almost abelian automorphism groups, which gives an answer to a long standing question of Nikulin.
\end{abstract}
\maketitle

\setcounter{tocdepth}{1}
\tableofcontents


\section{Introduction}
A {\it K3 surface} is a compact complex manifold $X$ of dimension $2$ with trivial canonical bundle and $H^1(X,\mathcal{O}_X)=0$. For an automorphism $f\in {\rm Aut}(X)$,  it is known that the topological entropy $h(f)$ of $f$ is determined by $$h(f)={\rm log}\; \lambda_1 (f).$$ Here $\lambda_1(f)$ is the {\it first dynamical degree of} $f$, i.e., the spectral radius of  $f^*| H^2(X,\mathbb{C})$, which coincides with the spectral radius of the action $f^*| {\rm NS}(X)$ on the N\'eron-Severi lattice ${\rm NS}(X)$  when $X$ is projective (see e.g. \cite[Corollary 1.4]{ES13}). If $h(f)>0$ (resp. $h(f)=0$), then we say  $f$ is {\it of positive entropy} (resp. {\it of zero entropy}). If there exists (resp. does not exist) $f\in {\rm Aut}(X)$ with $h(f)>0$, then we say $X$ is {\it of positive entropy} (resp. {\it of zero entropy}). A {\it Salem number} $\tau >1$ is an algebraic integer which is conjugate to $\frac{1}{\tau}$, and whose remaining conjugates lie on the unit cycle $S^1$. A {\it Salem polynomial} is the minimal polynomial of a Salem number. See \cite{BHPV04} (resp. \cite{Mc02}, \cite{DS05}) for basics on complex surfaces (resp. complex dynamics) that we use here. 

 Cantat \cite[Proposition 1]{Ca99} observed that a compact K\"ahler surface of positive entropy is bimeromorphic to one of the following surfaces: a K3 surface, a complex torus of dimension $2$, an Enriques surface, or $\mathbb{P}^2$. McMullen \cite{Mc02} observed that if $h(f)>0$, then the characteristic polynomial of $f^*| H^2(X, \mathbb{C})$ is the product of one Salem polynomial and some (possibly no) cyclotomic polynomials (thus, the first dynamical degree $\lambda_1(f)$ is a Salem number if $h(f)>0$), and he constructed (non-projective) K3 surface automorphisms with Siegel disks. The two observations highlight unexpected relations among surface automorphisms, complex dynamics, and number theory. Such relations have caught the attention of many people from various points of view (see \cite{Og14} for a survey on related studies). Interesting examples of projective K3 surface automorphisms of positive entropy have been found (see e.g. \cite{Mc11a}, \cite{BK14}, \cite{Og15}, \cite{Mc16}, \cite{BE21}). Lehmer number ($=1.176280818...$) is the smallest Salem number which can be realized by (rational) surface automorphisms (\cite{Mc07}) but it cannot be realized on $2$-dimensional complex tori (for trivial reasons) or Enriques surfaces (\cite[Theorem 1.2]{Og10a}). McMullen \cite[Theorem 1.1]{Mc16} proved that it can be realized by an automorphism of a projective K3 surface. Based on \cite{Mc16}, Brandhorst-Elkies \cite[Theorem 1.2]{BE21} derived explicit equations for the surface and automorphism. Moreover, interesting surfaces of other types might be obtained from projective K3 surfaces of positive entropy (see e.g. \cite{OY20}, \cite{DOY21}). However, it seems still unknown which projective K3 surfaces are of positive entropy. Our aim here is to characterize such surfaces in terms of their N\'eron-Severi lattices. 

From now on, let $X$ be a projective K3 surface. See Section \ref{sec:lattices} for basics on lattices. The surface $X$ being of positive entropy or not depends only on the isometry class of the even hyperbolic lattice ${\rm NS}(X)$ since the natural homomorphism
$$
{\rm Aut}(X)\longrightarrow {\rm O}({\rm NS}(X))/W({\rm NS}(X))
$$
has finite kernel and cokernel by a consequence of the Torelli theorem for algebraic K3 surfaces (\cite{PS71}). If a {\it genus one fibration} $\varphi: X\longrightarrow \mathbb{P}^1$ (i.e., a general fiber of the morphism $\varphi$ is an elliptic curve) admits a section, then we call $\varphi$ an {\it elliptic fibration}. We denote by $\rho(X):={\rm rk}( {\rm NS}(X))$ the Picard number of $X$. For $X$ of zero entropy, ${\rm Aut}(X)$ could be either finite or infinite. Let
$$
\sF:= \{ {\rm NS}(X) |\; X \text{ is a projective K3 surface with }|{\rm Aut}(X)|<\infty \text{ and } \rho(X)\ge 3\},
$$
and
$$
\sI:= \{ {\rm NS}(X) |\; X \text{ is a projective K3 surface of zero entropy with }|{\rm Aut}(X)|=\infty\}.
$$
Our aim is equivalent to determination of the sets $\sF$ and $\sI$ (modulo cases $\rho(X)\le 2$).  The set $\sF$  consists of exactly $118$ lattices by classification for $\rho(X)\neq 4$ due to Nikulin \cite{Ni81}, \cite{Ni85} and by classification for $\rho(X)=4$ due to Vinberg \cite{Vi07}. For $|{\rm Aut}(X)|=\infty$, Oguiso \cite[Theorem 1.4]{Og07} observed that $X$ is of zero entropy if and only if ${\rm Aut}(X)$ preserves a unique genus $1$ fibration; if $X$ is of zero entropy, then ${\rm Aut}(X)$ is almost abelian. Thus,  
$$
\sI=\{{\rm NS}(X)|\; X\text{ proj. K3 with infinite }{\rm Aut}(X)\text{ preserving a unique genus $1$ fibration}\}. 
$$ 
Hence, people may study $\sI$ from two different motivations: entropy and automorphism groups. The lattices in $\sI$ are closely related to $\sF$ and they behave like $U\oplus \Lambda$ (see \cite{Bo90}), where $\Lambda$ is the Leech lattice. Thus, $\sI$ is interesting from arithmetic, dynamical and geometric points of view. Actually, attempts to find members of $\sI$ have a long history (see \cite{Ni79}, \cite{Ni81}, \cite{Ni96}, \cite{Ni99}, \cite{Og07}, \cite{Ni14}, \cite{OY19}, \cite{Ni20}, \cite{Me21}, etc.). 

Next, we briefly review known results on $\sI$. Nikulin (\cite{Ni96}, \cite{Ni14}) observed that $\sI$ is a finite set and asked for classification of the lattices in $\sI$ (i.e., $\sS {\rm EK3}_p$ in \cite[Section 4]{Ni14} when the base field is $\C$). Note that $1\le \rho(X)\le 20$. If $\rho(X)\le 2$, then ${\rm NS}(X)\notin \sI$. So one only needs to consider the cases $3\le \rho(X)\le 20$.  Let 
$$
{\sI}^ {n}:=\{{\rm NS}(X)\in \sI | \; {\rm rk}({\rm NS}(X))=n\}.
$$
Nikulin (\cite{Ni81}, \cite{Ni20}) obtained twelve $2$-elementary lattices in $\sI$  by considering fixed points of certain involutions and there is no other $2$-elementary lattice in $\sI$ (see also \cite{OY19}; for a few $p$-elementary ($p=3,5,7$) ${\rm NS}(X)\in \sI$ which can be proved similarly, see \cite{AST11}). Nikulin \cite[Section 3.3]{Ni99} found the $18$ rank $3$ lattices in $\sI$ using Vinberg's algorithm \cite{Vi75}. Oguiso \cite[Theorem 1.6]{Og07} proved that 
${\sI}^{20}$ is empty based on \cite{SI77}. With the help of computer, Mezzedimi \cite[Theorems 0.2, 0.3]{Me21} proved that ${\sI}^{19}$ is empty and the subset 
$\{{\rm NS}(X)\in \sI |\; {\rm Aut}(X) \text{ preserves}$ $\text{an elliptic fibration with only irreducible fibers} \}
$ consists of $32$ lattices. To the best of our knowledge, all known examples (about $50$ lattices) in $\sI$ have been reviewed above. To sum up, the lattices in $\sI$ has been classified only for a few Picard numbers and for lattices satisfying some technical conditions. Thus, the classification problem for $\sI$ is still widely open. Now we solve this problem completely by finding all the remaining lattices, which gives an answer to the question of Nikulin on $\sS {\rm EK3}_p$ in \cite[Section 4]{Ni14}.

\begin{theorem}\label{main1} The set $\sI$ consists of exactly the $193$ lattices listed in the Appendix.\end{theorem}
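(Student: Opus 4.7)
The plan is to convert membership in $\sI$ into a purely lattice-theoretic condition and then enumerate the resulting (finite, by Nikulin's theorem) list. By the Torelli theorem together with the Oguiso characterization quoted in the introduction, $L = {\rm NS}(X) \in \sI$ if and only if the quotient $O^+(L)/W(L)$ is infinite and there exists a primitive isotropic $f$ in the closure of the fundamental chamber for $W(L)$ whose ray is fixed (up to sign) by a finite-index subgroup of $O^+(L)/W(L)$; this $f$ is then the class of the unique preserved genus $1$ fibration, and its uniqueness translates into uniqueness of the corresponding isotropic vertex up to the $O^+(L)/W(L)$-action. The entire problem is thereby reduced to an explicit polyhedral computation on hyperbolic even lattices of signature $(1,n-1)$.

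The plan is then to proceed Picard number by Picard number for $3\le n\le 18$: the cases $n\in\{1,2\}$ are trivially excluded, $n=3$ is already due to Nikulin, and $n\in\{19,20\}$ are due to Oguiso and Mezzedimi. For each $n$, I would enumerate candidate even lattices $L$ of signature $(1,n-1)$ that primitively embed into $\Lambda_{K3}=U^{\oplus 3}\oplus E_8(-1)^{\oplus 2}$ and contain an isotropic vector $f$; fixing $f$, the quotient $M := f^{\perp}/\mathbb{Z}f$ is a negative-definite even lattice of rank $n-2$ whose root sublattice and discriminant form are tightly constrained by the required shape of the fundamental chamber, so one lists candidate $M$'s and then their possible gluings producing $L$. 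For each resulting $L$, I would run Vinberg's algorithm to produce the fundamental chamber $P$ for $W(L)$, compute its isotropic vertices together with the symmetry group ${\rm Sym}(P) \cong O^+(L)/W(L)$, and test the characterization of the previous paragraph. The known partial cases ($2$-elementary of Nikulin, $p$-elementary of Artebani--Sarti--Taki, irreducible-fiber of Mezzedimi) would serve both as sanity checks and as seeds for the enumeration.

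The main obstacle will be the sheer combinatorial bulk of the case analysis in the middle ranks (roughly $6 \le n \le 14$), where the number of negative-definite overlattices $M$ and the number of walls of $P$ both grow quickly; handling this will require substantial computer assistance extending Mezzedimi's algorithmic framework. A subtler issue is the termination and correctness of Vinberg's algorithm when ${\rm Sym}(P)$ is infinite: one must argue that all essential walls of $P$ can be identified in finitely many steps and that ${\rm Sym}(P)$ is generated by the symmetries visible on the computed finite portion of $P$, which ultimately rests on the \emph{a priori} finiteness of $\sI$ due to Nikulin together with a direct verification on each candidate. After deduplication under lattice isomorphism, the output of this procedure is the list of $193$ lattices recorded in the Appendix.
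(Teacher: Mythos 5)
Your overall frame (Torelli plus Oguiso's characterization reduces everything to a finite lattice classification, carried out rank by rank with computer assistance, with $n=3$ and $n=19,20$ already known) matches the paper's, but the two steps you gloss over are exactly where the real content lies, and as written both would fail. First, you never give a mechanism that makes the candidate list finite and explicit. For lattices of the form $U\oplus M$ one can reduce to overlattices of root lattices together with Watson's list of one-class genera (after Mezzedimi), but for $L\in\sI$ \emph{not} containing $U$ --- i.e., when the preserved isotropic class $e$ admits no section --- ``listing the possible gluings producing $L$'' is not an enumeration: there is no a priori finite, explicit list, and Nikulin's abstract finiteness of $\sI$ gives no bound you can compute with. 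The paper's solution is the notion of critical sublattice: every such $L$ sits between $L'_{cr}$ and a zero-entropy overlattice $L'=U\oplus M$ (Lemma \ref{lem:overlattice}), and Theorem \ref{thm:S} shows that the full-rank zero-entropy sublattices of $L'$ are exactly those containing $L'_{cr}$, which is effectively computable (Theorems \ref{thm:S=L}, \ref{thm:SinL}). Without something playing this role, your enumeration of the non-$U\oplus M$ members of $\sI$ never terminates.

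Second, your computational engine is Vinberg's algorithm, but for the lattices at issue ${\rm O}^+(L)/W(L)$ is infinite, so the fundamental chamber generally has infinitely many walls and Vinberg's algorithm does not terminate; your proposed remedy (stop once ``all essential walls'' are found, justified by the finiteness of $\sI$) is circular, since finiteness of the answer set says nothing about when to stop on a given candidate. The paper instead certifies zero entropy via the covering-radius criterion (Theorem \ref{thm:cradius}) and via Shimada's implementation of Borcherds' method, which terminates because it works with chambers induced from an embedding into $U\oplus E_8^m$, and --- crucially for feasibility --- it first eliminates the great majority of positive-entropy candidates with a battery of cheap tests, notably the overlattice test resting on Theorem \ref{thm:overlattice} (a nontrivial even overlattice of a non-$2$-reflective zero-entropy lattice containing $U$ must be $2$-reflective). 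Your sketch has no analogue of either ingredient, so even the $U\oplus M$ case would be computationally out of reach as described.
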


In particular, many members of $\sI$ are new. We denote by
$$
\sZ:= \sF \cup \sI.
$$ 
By Theorem \ref{main1} and the known results on $\sF$, we obtain our desired characterization of projective K3 surfaces of positive entropy.
\begin{theorem}\label{main2}
 Let $X$ be a projective K3 surface.  Then $X$ is of positive entropy if and only if one of the following statements holds: 
 
 \begin{enumerate}
\item $\rho(X)\ge 3$ and ${\rm NS}(X)$ is isometric to none of the $311$ lattices in $\sZ$;
\item $\rho(X)=2$ and  there exists no nonzero $x\in {\rm NS}(X)$ with $x^2=0, -2$.
\end{enumerate}
\end{theorem}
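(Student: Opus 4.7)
The plan is to prove the theorem by splitting on $\rho(X)$. The case $\rho(X)\le 1$ needs only a brief check: ${\rm NS}(X)$ of rank $\le 1$ has finite orthogonal group, so by Torelli $|{\rm Aut}(X)|<\infty$ and $X$ has zero entropy, consistent with both (1) and (2) failing. The essential work is in the ranges $\rho(X)\ge 3$ and $\rho(X)=2$, which I would treat separately.

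For $\rho(X)\ge 3$, the proof is pure bookkeeping. By the Torelli-theoretic consequence recalled in the introduction, both the finiteness of $|{\rm Aut}(X)|$ and the positivity of entropy of $X$ depend only on the isometry class of ${\rm NS}(X)$. Hence, under $\rho(X)\ge 3$, we have ${\rm NS}(X)\in\sF$ iff $|{\rm Aut}(X)|<\infty$, and ${\rm NS}(X)\in\sI$ iff $|{\rm Aut}(X)|=\infty$ and $X$ has zero entropy. Since $|{\rm Aut}(X)|<\infty$ forces zero entropy, the union $\sZ=\sF\cup\sI$ coincides with the set of isometry classes of ${\rm NS}(X)$ for zero-entropy projective K3s with $\rho\ge 3$; equivalently, ${\rm NS}(X)\notin\sZ$ iff $X$ is of positive entropy, which is condition (1). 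The concrete count $311=118+193$ comes from the Nikulin-Vinberg classification of $\sF$ together with Theorem \ref{main1}.

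For $\rho(X)=2$, I would show that $X$ is of positive entropy iff ${\rm NS}(X)$ represents neither $0$ nor $-2$. For the forward direction, let $f\in{\rm Aut}(X)$ with $\lambda_1(f)>1$. The action $f^*$ on the signature-$(1,1)$ space ${\rm NS}(X)_{\R}$ has eigenvalues $\lambda,\pm\lambda^{-1}$ with $|\lambda|>1$, so $(f^*)^2$ is diagonalizable with the two distinct positive eigenvalues $\lambda^{\pm 2}$ and eigenrays equal to the two isotropic rays of the $(1,1)$-form. If ${\rm NS}(X)$ contained a primitive isotropic $v$, then $v$ would lie on one of these rays, hence be an eigenvector of $(f^*)^2$ with eigenvalue $\lambda^{\pm 2}\ne\pm 1$; but lattice-primitivity of $v$ and $(f^*)^2(v)$ forces the eigenvalue to be $\pm 1$, a contradiction. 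If instead ${\rm NS}(X)$ represented $-2$ but not $0$, the ample chamber inside the positive cone is a $2$-dimensional wedge whose two bounding walls must both be orthogonal to $(-2)$-classes (no isotropic lattice ray is available as a wall), so $X$ carries exactly two irreducible $(-2)$-curves $C_1,C_2$ which form an $\R$-basis of ${\rm NS}(X)_\R$; since $f$ permutes $\{C_1,C_2\}$, $f^2$ fixes each $C_i$, forcing $(f^2)^*=\id$ on ${\rm NS}(X)$, which contradicts $\lambda_1(f^2)=\lambda_1(f)^2>1$.

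For the backward direction with $\rho(X)=2$, suppose ${\rm NS}(X)$ represents neither $0$ nor $-2$. Then $W({\rm NS}(X))=\{1\}$, and by Torelli the map ${\rm Aut}(X)\to O({\rm NS}(X))$ has finite kernel and cokernel. By the classical theory of indefinite integral binary quadratic forms (Pell's equation), $O({\rm NS}(X))$ is infinite and every element of infinite order is hyperbolic with spectral radius $>1$ (a quadratic Salem unit). Lifting such an element back to ${\rm Aut}(X)$ yields a positive-entropy automorphism of $X$.

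The main obstacle is the $-2$ sub-case of the $\rho(X)=2$ analysis, which requires the chamber-decomposition argument to pin down the exact number (two) of irreducible $(-2)$-curves; once that is done, the permutation argument is immediate. By contrast, the $\rho(X)\ge 3$ case is a pure logical rearrangement once Theorem \ref{main1} and the Nikulin-Vinberg classification of $\sF$ are in hand.
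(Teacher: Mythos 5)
Your proposal is correct and takes essentially the same route as the paper: for $\rho(X)\ge 3$ the statement is pure bookkeeping from Theorem \ref{main1} together with the Nikulin--Vinberg classification of $\sF$ (using that both entropy-positivity and finiteness of ${\rm Aut}(X)$ depend only on the isometry class of ${\rm NS}(X)$), which is exactly how the paper deduces it. For $\rho(X)\le 2$ the paper simply cites \cite[Chapter 15, Example 2.11]{Hu16}; your self-contained treatment of $\rho(X)=2$ (eigenray argument for isotropic classes, the two-wall chamber argument when a root is present, and the Pell-equation argument for the converse) is a correct write-up of that standard material rather than a genuinely different method.
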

For cases $\rho(X)\le 2$, see e.g. \cite[Chapter 15, Example 2.11]{Hu16}. Note that the set

$$
{\sZ}^ {\geqslant 5}:=\{{\rm NS}(X)\in \sZ | \; {\rm rk}({\rm NS}(X))\ge 5\}
$$ 
has exactly $229$ lattices. Moreover, by \cite[Theorem 9.1.1]{Ni81} and \cite[Theorem 1.4]{Og07}, the following $3$ conditions are equivalent when $|{\rm Aut}(X)|=\infty$ and $\rho(X)\ge 5$: $X$ of zero entropy, ${\rm Aut}(X)$ preserving a unique genus $1$ fibration, ${\rm Aut}(X)$ almost abelian. Combining with Theorem \ref{main2}, we have the following result which gives an answer to Nikulin's question on classification of projective K3 surfaces with almost abelian automorphism groups (see \cite[Remark 9.1.2]{Ni81}).

\begin{theorem}\label{main3}
Let $X$ be a projective K3 surface with $\rho(X)\ge 5$. Then ${\rm Aut}(X)$ is almost abelian if and only if ${\rm NS}(X)$ is isometric to one of the $229$ lattices in ${\sZ}^ {\geqslant 5}$.
\end{theorem}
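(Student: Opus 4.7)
The plan is to deduce Theorem \ref{main3} as a formal consequence of Theorem \ref{main2} together with the three-way equivalence already recorded in the excerpt: when $|{\rm Aut}(X)|=\infty$ and $\rho(X)\ge 5$, the conditions \emph{$X$ of zero entropy}, \emph{${\rm Aut}(X)$ preserves a unique genus $1$ fibration}, and \emph{${\rm Aut}(X)$ almost abelian} coincide, by \cite[Theorem 9.1.1]{Ni81} and \cite[Theorem 1.4]{Og07}. A second ingredient I would highlight at the start is the observation already used in the introduction: both the finiteness of ${\rm Aut}(X)$ and the zero/positive entropy dichotomy depend only on the isometry class of ${\rm NS}(X)$, via the Torelli theorem consequence that ${\rm Aut}(X)\to {\rm O}({\rm NS}(X))/W({\rm NS}(X))$ has finite kernel and cokernel.

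For the \emph{only if} direction, I would split on whether ${\rm Aut}(X)$ is finite. If $|{\rm Aut}(X)|<\infty$, then ${\rm Aut}(X)$ is trivially almost abelian, and since $\rho(X)\ge 5\ge 3$, the lattice ${\rm NS}(X)$ lies in $\sF$ by definition, hence in $\sZ^{\geqslant 5}$. If instead $|{\rm Aut}(X)|=\infty$, the three-way equivalence forces $X$ to be of zero entropy, and then ${\rm NS}(X)\in \sI \subset \sZ^{\geqslant 5}$.

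For the \emph{if} direction, suppose ${\rm NS}(X)\in \sZ^{\geqslant 5}=\sF^{\geqslant 5}\cup \sI^{\geqslant 5}$. If ${\rm NS}(X)\in \sF^{\geqslant 5}$, then by definition there is a projective K3 surface $Y$ with ${\rm NS}(Y)\cong {\rm NS}(X)$ and $|{\rm Aut}(Y)|<\infty$; by isometry-invariance of the finiteness of the automorphism group, $|{\rm Aut}(X)|<\infty$ as well, so ${\rm Aut}(X)$ is almost abelian. If ${\rm NS}(X)\in \sI^{\geqslant 5}$, isometry-invariance gives that $X$ has infinite ${\rm Aut}(X)$ and is of zero entropy, and the three-way equivalence then yields that ${\rm Aut}(X)$ is almost abelian. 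Finally I would record the count: $\sF^{\geqslant 5}$ and $\sI^{\geqslant 5}$ account for the stated $229$ lattices using the explicit lists from Nikulin--Vinberg and from Theorem \ref{main1}.

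There is essentially no obstacle in the logical combination itself; the real work has already been absorbed into Theorem \ref{main1}, which exhaustively classifies $\sI$. The only minor point to verify carefully is the isometry-invariance used in the \emph{if} direction, but this is immediate from the Torelli-theoretic setup already cited. In particular, the deduction of Theorem \ref{main3} from Theorems \ref{main1} and \ref{main2} is mostly bookkeeping, which is why the paper states it at this stage; any difficulty is deferred to the classification underlying Theorem \ref{main1}.
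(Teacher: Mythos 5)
Your proposal is correct and follows essentially the same route as the paper: the paper likewise deduces Theorem \ref{main3} directly by combining Theorem \ref{main2} (equivalently, the classification of $\sF$ and $\sI$ from Theorem \ref{main1}) with the three-way equivalence of zero entropy, preservation of a unique genus one fibration, and almost abelianness for $|{\rm Aut}(X)|=\infty$ and $\rho(X)\ge 5$ from \cite[Theorem 9.1.1]{Ni81} and \cite[Theorem 1.4]{Og07}, together with the Torelli-theoretic fact that these properties depend only on the isometry class of ${\rm NS}(X)$. Your case split on the finiteness of ${\rm Aut}(X)$ is exactly the intended bookkeeping.
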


Next we briefly explain ideas of the (computer-assisted) proof of Theorem \ref{main1}. Roughly speaking, we first find finitely many explicit candidates $L$ and then check whether $L\in \sI$ one by one. Slightly more precisely, there are two cases: (1) $L$ are of the form $U\oplus M$ and (2) $L\neq U\oplus M$. Here $U$ is the lattice with Gram matrix $\small{\left(\begin{array}{cc} 
0 & 1 \\
1 & 0 
\end{array} \right)}$ and $M$ are even negative definite lattices. Two issues arise in our approach: 
(i) for case (1), an explicit finite list of candidates of $L$ is known but there are thousands of candidates; (ii) it is unclear how to handle case (2) (for example, unlike (1), an explicit list of finitely many candidates is unknown). We introduce a new notion, the {\it critical sublattice} (Section \ref{sec:criticalsub}). Critical sublattices are particularly important for us since they play crucial role in dealing with both issues (i) and (ii). Due to our characterization of critical sublattices (Theorem \ref{thm:S}), for case (2), all we need to do is (essentially) to compute the critical sublattices $L_{cr}$ of $L=U\oplus M\in \sI$ (see also Lemma \ref{lem:overlattice}). Moreover, by properties of $L_{cr}$ (Lemma \ref{lem:S}, Theorems \ref{thm:S=L}, \ref{thm:SinL}), such computation can be done directly even without using computer algebra (see Example \ref{exm:rk3}). In fact, it turns out that Theorem \ref{thm:S=L} can apply to many lattices $L=U\oplus M\in \sI$ and then we get $L_{cr}=L$ immediately without doing any further calculations. Hence, case (2) (in particular, issue (ii)) is completely solved by critical sublattices. Using critical sublattices again, we prove an explicit relation between $\sI$ and $\sF$ (Theorem \ref{thm:overlattice}), based on which we introduce a new test, the {\it overlattice test} (Section \ref{sec:tests}), for checking entropy-positivity of lattices $L=U\oplus M$. For case (1), the overlattice test rules out many candidates in an early stage (see e.g. Theorem \ref{thm:atleast13}), which reduces the amount of the work needed for further calculations considerably. Combining this test with (generalizations of) other (known) tests (see Remarks \ref{rem:tests}, \ref{rem:exception}), we find all the lattices $U\oplus M\in\sI$ using computer algebra, which means we solve case (1).   

Let us explain a little more about Theorems \ref{thm:S}, \ref{thm:S=L}, \ref{thm:SinL}, \ref{thm:overlattice} on critical sublattices. The proofs of them are entirely free from computer algebra. However, our resulting formulations are ones which fit well with practical calculations. It is expected that lattices in $\sI$ and $\sF$ are closely related but an explicit relation like Theorem \ref{thm:overlattice} seems rare. We believe that these results are interesting in and of themselves and will be applicable to other problems.

We conclude the Introduction by posing some open questions closely related to our work. A systematic way to check realizability of a prescribed Salem number by   projective K3 surface automorphisms has been established by McMullen (\cite{Mc02}, \cite{Mc11a}, \cite{Mc16}). A closely related problem in a somewhat inverse direction is the following: for a prescribed projective K3 surface $X$ of positive entropy, it would be interesting if there is a way to determine (minimum of) the Salem numbers which can be realized on $X$.  This problem might be tractable for such $X$ with an explicit (finite) set of generators of ${\rm Aut}(X)$ (see e.g. \cite{Vi83}, \cite{KK01}, \cite{DK02}, \cite{Sh15}, \cite{HKL20}).  The groups ${\rm Aut}(X)$ for ${\rm NS}(X)\in\sF$ are small (\cite{Ko89}, see e.g. \cite{Mu88} for larger finite groups acting on K3 surfaces) and the geometry of such $X$ is well-understood (see e.g. \cite{Ni81}, \cite{Ro20}).  It is of interest to compute ${\rm Aut}(X)$ for ${\rm NS}(X)\in \sI$ and to study the geometry of such $X$. In Sections \ref{sec:lattices}-\ref{sec:tests}, everything is stated in lattice language. In particular, the arguments/results there are characteristic free and are applicable to classification for analogue of $\sI$ in positive characteristic.

\medskip

{\bf Acknowledgement.} I would like to thank Professor Keiji Oguiso for valuable discussions and comments. This work is partially supported by the National Natural Science Foundation of China (No. 12071337, No. 11701413, No. 11831013).


\section{Lattices and zero entropy triples}\label{sec:lattices}
In this section, we recall some basics on lattices (for more details, we refer to \cite{Ni80}, \cite{CS99}, \cite{Mc16}) and we introduce the notion of zero entropy triple which will be used later.

\subsection*{Lattices} A {\it lattice} is a finitely generated free $\Z$-module  $L$ together with a symmetric bilinear form $(*, **)=(*, **)_L: L\times L\longrightarrow \Z.$ We often denote $(x,x)$ by $x^2$ for simplicity. We say $L$ is an {\it even} lattice if $x^2\in 2\Z$ for all $x\in L$; otherwise, $L$ is called an {\it odd} lattice. We call $x\in L$ a {\it root} if $x^2=-2$. A {\it root lattice} is a lattice generated by roots. By $A_i$ ($i\ge 1$), $D_j$ ($j\ge 4$), $E_k$ ($k=6,7,8$) we mean the negative definite root lattice whose basis is given by the corresponding Dynkin diagram. We call $L$ a {\it hyperbolic} lattice of rank ${\rm rk}(L)=n$ if the signature of $L$ is $(1, n-1)$, where the first (resp. second) entry is the number of positive (resp. negative)
 squares. We denote by $U$ the unique rank 2 unimodular hyperbolic even lattice. For a sublattice $K\subset L$ (resp., an element $x\in L$), we denote by $K_L^\perp$ (resp., $x_L^\perp$) the orthogonal complement (the subscript $L$ is sometimes omitted if there is no confusion).  The {\it determinant} ${\rm det}(L)$ of a lattice $L$ is the determinant of any Gram matrix of $L$. For a nonzero $a\in \Z$, the lattice $L(a)$ is defined to be the same $\Z$-module as $L$ with the bilinear form $(x,y)_{L(a)}:=a(x,y)_L.$ We use $[a]$ to denote the rank one lattice with determinant $a$. 
 
\subsection*{Glue groups and discriminant forms} Let $L$, $L^\prime$ be two nondegenerate even lattices. We may view its dual $L^*={\rm Hom}(L,\Z)$ as a subset of $L\otimes \Q$ via the canonical embedding $L\hookrightarrow L^*$ determined by the bilinear form of $L$. The finite abelian quotient group $G(L):=L^*/L$ is called the {\it glue group} of $L$. For any $x\in L^*$, we denote by $\overline{x}$ the image of $x$ in $G(L)$ under the natural quotient map. We use $l(L)$ to denote the minimum number of generators of $G(L)$. The bilinear form of $L$ induces a ($\Q$-valued) bilinear form on $L^*$, which gives a bilinear form $b_L$ and a quadratic form $q_L$ on the glue group $G(L)$ as 
$$
b_L: G(L)\times G(L) \longrightarrow \Q/\Z, \; b_L(\overline{x},\overline{y})\equiv (x, y) \; {\rm mod}\; \Z,$$
and $$
q_L: G(L)\longrightarrow \Q/2\Z, \;\; q_L(\overline{x})\equiv (x,x)\; {\rm mod}\; 2\Z. $$
We say that $q_L$ is the {\it discriminant form}
 of $L$. Any isometry $f\in {\rm O}(L)$ of $L$ naturally induces an automorphism $\overline{f}\in {\rm O}(q_L)$ of the discriminant form $q_L$. If $L\subset L^\prime$ and they have the same rank,  then we say that $L^\prime$ is an {\it overlattice} of $L$. In such case, the subgroup $H_{L^\prime}:=L^\prime/L\subset G(L)$ is isotropic in the sense that $b_L| H_{L^\prime}=0$.  Overlattices (resp. even overlattices) of $L$ correspond bijectively to isotropic subgroups $H\subset G(L)$ (resp. $H\subset G(L)$ with $q_L | H=0$). A {\it gluing map} $\phi: H_1\longrightarrow H_2$ is an isomorphism of abelian groups $H_1\subset G(L)$ and $H_2\subset G(L^\prime)$ with $\phi (x, y)=-(x,y),$ for all $x,y \in H_1$. For a prime number $p$, we say that $L$ is a {\it $p$-elementary} lattice if $G(L)\cong {(\Z/p\Z)}^{l(L)}$.
 
 Let $L_i$ ($i=1,2$) be two nondegenerate even lattices of the same rank and signature. We say that $L_1$ and $L_2$ are in the same {\it genus} if their discriminant forms $q_{L_i}$ are isomorphic to each other. By the {\it genus of} $L$ we mean the set of isometry classes of lattices in the genus of $L$. The genus of $L$ is called a {\it one-class} genus if it contains only $L$. Genera of indefinite lattices are often one-class genera (cf. \cite[Section $1.13^\circ$]{Ni80}, \cite[Chapter 15]{CS99}). 
   
\subsection*{Chambers and Weyl groups} Let $L$ be an even hyperbolic lattice and $$\sR_{L}:=\{x\in L|\;  x^2=-2\}.$$ The positive cone $\mathcal{P}_{L}$ of $L$ is defined to be one of the two connected components of $$\{x\in L\otimes \R |\; x^2 >0\}.$$ We denote by ${\rm O}^+(L)\subset {\rm O}(L)$ the subgroup consisting of isometries preserving $\mathcal{P}_{L}$. The {\it Weyl group} $$W(L)\subset {\rm O}^+(L)$$ of $L$ is the subgroup generated by all the reflections $$s_v: L\longrightarrow L,\,\, s_v(x)=x+(x,v)v, \, \forall\, x\in L,$$ where $v\in \sR_L$. The positive cone $\mathcal{P}_L$ is cut into open, convex {\it chambers} $\sC$ by the set of all {\it root hyperplanes} $$v^\perp:=v_{L\otimes \R}^{\perp}=\{x\in L\otimes \R|\; (x,v)=0\},$$ where $v\in \sR_L$. The Weyl group $W(L)$ acts simply transitively on the chambers of $\sP_L$. 

Let $\sC$ be a chamber of $\sP_L$. If a nonempty open subset of a root hyperplane $r^\perp$ is contained in the closure $\overline{\sC}$ (inside $L\otimes \R$) of $\sC$, then $r^\perp$ is called a {\it wall} of $\sC$. Let $h\in L\cap \sC$. Note that $h_L^\perp$ contains no roots. We call a root $r\in L$ a {\it $\sC$-irreducible root} if the root hyperplane $r^\perp$ is a wall of $\sC$ and $(r,h)>0$. Any root in $L$ can be written as a linear combination of $\sC$-irreducible roots with integer coefficients of the same sign (cf. \cite[Theorem 1.1]{Bo90}). The {\it automorphism group} $${\rm Aut}(\sC)\subset {\rm O}^+(L)$$ of $\sC$ is the group of isometries of $L$ preserving $\sC$. The quotient group ${\rm O}^+(L)/W(L)$ is isomorphic to ${\rm Aut}(\sC)$. The lattice $L$ is called {\it $2$-reflective} if ${\rm O}^+(L)/W(L)$ is a finite group.

\subsection*{Zero entropy lattices and triples} Let $L$ be an even hyperbolic lattice and let $\sC$ be a chamber of the positive cone $\sP_L$.  For $f\in {\rm Aut}(\sC)\subset {\rm O}^+(L)$, the characteristic polynomial $\chi_f(x)$ of $f$ is the product of cyclotomic polynomials and at most one Salem polynomial counted with multiplicities (see \cite{Mc02}, \cite{Og10b}). If a Salem polynomial is a factor of $\chi_f(x)$, then we call $f$ {\it of positive entropy}; otherwise, $f$ is called {\it of zero entropy}. The lattice $L$ is {\it of zero entropy} if $f$ is of zero entropy for all $f\in {\rm Aut}(\sC)$; otherwise, $L$ is called {\it of positive entropy}. If $L$ is an overlattice of a zero entropy lattice $L_1$, then $L$ is of zero entropy too. In fact, if $f$ is an isometry of $L$ preserving a chamber of $\sP_L$, then a sufficiently large power of $f$ preserves $L_1$ and a chamber of $\sP_{L_1}$. For $x\in L$, we set $${\rm Aut}(\sC)_x:=\{f\in {\rm Aut}(\sC)|\; f(x)=x\}.$$

\begin{theorem}[{\cite{Og07}}]\label{thm:Og}
Let $L$ be an even hyperbolic lattice of zero entropy  with infinite automorphism group ${\rm Aut}(\sC)$, where $\sC\subset \sP_L$ is a chamber. Then there exists a unique primitive isotropic element $e\in L\cap (\overline{\sC}\setminus \{0\})$ such that ${\rm Aut}(\sC)$ preserves $e$. Moreover, $e$ is the unique primitive isotropic element $e\in L\cap (\overline{\sC}\setminus \{0\})$ such that ${\rm Aut}(\sC)_e$ is infinite.
\end{theorem}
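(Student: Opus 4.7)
The plan is to exploit the classical trichotomy of isometries of the hyperbolic space $\sP_L/\R_{>0}$ applied to ${\rm Aut}(\sC)$: every nontrivial element is elliptic (fixing an interior point, hence of finite order in this discrete setting), parabolic (fixing a unique isotropic ray on the boundary), or loxodromic (fixing two isotropic rays, spectral radius $>1$). The zero-entropy hypothesis forbids loxodromic elements, so every element of ${\rm Aut}(\sC)$ is elliptic or parabolic.

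First I would produce a parabolic element in ${\rm Aut}(\sC)$. As ${\rm Aut}(\sC)\subset {\rm O}^+(L)\subset \GL_n(\Z)$ is discrete, its infinitude forces an element $p$ of infinite order: any torsion subgroup of $\GL_n(\Z)$ is finite via Selberg's lemma (intersect with a torsion-free congruence subgroup of finite index). Such $p$ is neither elliptic (finite order in discrete groups) nor loxodromic (zero entropy), so it is parabolic and fixes a unique isotropic ray $\R_{\ge 0}e_0\subset \overline{\sP_L}$. The iterates $p^nx$ of any $x\in\sC$ converge projectively to $\R_{\ge 0}e_0$, and since $\sC$ is ${\rm Aut}(\sC)$-invariant this forces $e_0\in\overline{\sC}$; rescaling gives a primitive $e\in L\cap(\overline{\sC}\setminus\{0\})$.

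The key step is to show that all parabolic elements of ${\rm Aut}(\sC)$ share the fixed ray $\R_{\ge 0}e$. Suppose, for contradiction, that $p_1,p_2$ are parabolic with distinct fixed rays on $\partial\sP_L$. A ping-pong argument in hyperbolic geometry — choose disjoint horoball neighborhoods of the two fixed points and track their images under sufficiently high powers — produces a loxodromic product $p_1^N p_2^N\in{\rm Aut}(\sC)$ for large $N$, contradicting zero entropy. Consequently, for any $g\in{\rm Aut}(\sC)$, the conjugate $gpg^{-1}$ is again parabolic and must fix $\R_{\ge 0}e$; on the other hand it fixes $ge$, so by the uniqueness of the parabolic fixed ray $ge\in\R e$. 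Since $g$ has integer entries and preserves $\overline{\sC}\subset\overline{\sP_L}$, and $-e\notin\overline{\sP_L}$ for a nonzero isotropic $e$, we obtain $ge=e$. This yields existence, and uniqueness of such a primitive $e$ is immediate from the uniqueness of the common parabolic fixed ray.

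For the second characterization, assume $e'\in L\cap(\overline{\sC}\setminus\{0\})$ is primitive isotropic with ${\rm Aut}(\sC)_{e'}$ infinite. Applying the Selberg/Minkowski argument to this stabilizer yields a non-torsion element, which is again neither elliptic nor loxodromic and hence is a parabolic $p'\in{\rm Aut}(\sC)_{e'}$. Then $p'$ fixes $e'$ by hypothesis and fixes $e$ by the first part of the theorem, and the uniqueness of the parabolic boundary fixed point forces $e'=e$. The main obstacle I expect is the ping-pong step for parabolics with distinct fixed points: this is where the zero-entropy hypothesis must be used as a genuine group-theoretic constraint rather than element-by-element, and it requires some care with hyperbolic-geometric estimates. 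Everything else is essentially formal once the trichotomy and the discreteness of $\GL_n(\Z)$ are in hand.
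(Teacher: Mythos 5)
The paper gives no proof of this statement---it is imported verbatim from \cite{Og07}---and the argument there is essentially the one you propose: the elliptic/parabolic/loxodromic trichotomy for ${\rm O}^+(L)$ acting on hyperbolic space, Minkowski's lemma producing an infinite-order (hence parabolic) element of ${\rm Aut}(\sC)$, and the fact that in a discrete group two parabolics with distinct boundary fixed points generate a loxodromic element, which the zero-entropy hypothesis forbids. Your outline is correct; the one step worth making explicit is why the parabolic fixed ray is rational, so that it actually contains a primitive vector of $L$: after replacing $p$ by a unipotent power (its eigenvalues are roots of unity by Kronecker's theorem), the fixed isotropic line is the radical of the rational subspace $\ker(p-{\rm id})$ and is therefore defined over $\Q$.
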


Let $L$, $\sC$, $e$ be as in Theorem \ref{thm:Og}. Then we say $(L, \sC, e)$ is a {\it zero entropy triple}. Note that the quotient $e_L^\perp/\langle e\rangle$  is a well-defined negative definite even lattice which has no finite index root sublattices (\cite[Corollary 1.5.4]{Ni81}).  If $d\in L$ is a $\sC$-irreducible root with $(d,e)=1$, then we say $d$ is a {\it $\sC$-section} of $e$.  An even hyperbolic lattice containing $U$ is not $2$-reflective if and only if the genus of $U^\perp$ contains a lattice which has no root sublattices of finite index (\cite[Theorem 10.2.1]{Ni81}). Thus, $e$ has a $\sC$-section if and only if $U\subset L$. There are only finitely many zero entropy triples up to isometry (\cite{Ni96}). 

The following lemma will be useful later. In fact, it reduces our classification in Section \ref{sec:proof} to consideration of lattices of two special types:  lattices $L$ containing $U$ (to such lattices the tests in Section \ref{sec:tests} can be applied effectively) and their critical sublattices $L_{cr}$ (which can be easily handled based on results in Section \ref{sec:criticalsub}). 

\begin{lemma}\label{lem:overlattice}
Let $(L, \sC, e)$ be a zero entropy triple. If $L$ does not contain $U$, then there exists a zero entropy triple $(L^\prime, \sC^\prime, e^\prime)$ such that $U\subset L^\prime$ and $L^\prime$ is an overlattice of $L$.
\end{lemma}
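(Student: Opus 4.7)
The plan is to enlarge $L$ minimally so that a hyperbolic plane containing a rescaling of $e$ appears. Let $d\ge 1$ be the positive generator of the ideal $(e,L)\subset \Z$. The hypothesis $U\not\subset L$ forces $d\ge 2$: if $d=1$, then picking $y\in L$ with $(e,y)=1$ and replacing it by $y-(y^2/2)\,e$ (an integer shift since $L$ is even) produces an isotropic partner to $e$, yielding a primitive $U\subset L$. I will take
$$L' := L + \Z\cdot e',\qquad e' := e/d \in L\otimes \Q,$$
and show that $(L',\sC',e')$ is a zero entropy triple containing $U$ for a suitable chamber $\sC'\subset \sP_{L'}$.

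The lattice-theoretic verifications are routine: integrality follows from $(e',x) = (e,x)/d \in \Z$ for all $x\in L$; evenness descends from $L$ since $(e')^2 = 0$; the rank is preserved, so $L'$ is an overlattice of $L$; and primitivity of $e'$ in $L'$ follows from primitivity of $e$ in $L$ by a short computation. For the embedding $U\hookrightarrow L'$, choose $y\in L$ with $(e,y)=d$ and set $y' := y-(y^2/2)\,e'$; then $(y')^2=0$ and $(e',y')=1$, so $\langle e',y'\rangle\cong U$ sits primitively in $L'$. Finally, $L'$ inherits zero entropy from $L$ by the general principle, recorded earlier in the excerpt, that overlattices of zero entropy lattices are of zero entropy.

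The substantive step is to pin down $\sC'$ and to exhibit an infinite subgroup of $\Aut(\sC')$. Since $e'$ lies on the same isotropic ray as $e$, we have $e'\in \overline{\sC}$, so some chamber $\sC'\subset \sC$ of $\sP_{L'}$ satisfies $e'\in \overline{\sC'}$. The key finiteness fact is that the set
$$\sX := \{\sC''\text{ chamber of }\sP_{L'} : \sC''\subset \sC,\ e'\in \overline{\sC''}\}$$
is finite, because chambers of $\sP_{L'}$ whose closure contains the isotropic $e'$ correspond bijectively to Weyl chambers of the finite root system in the negative definite lattice $(e')^\perp_{L'}/\Z e'$. Any $f\in \Aut(\sC)$ fixes $e$ by Theorem \ref{thm:Og}, hence fixes $e'$, and extends to an isometry of $L' = L + \Z e'$ preserving $\sC$ and $e'$; such $f$ therefore permutes $\sX$. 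A finite-index subgroup $H\le \Aut(\sC)$ consequently stabilises $\sC'$, and $H\subset \Aut(\sC')_{e'}$ is infinite. Applying the uniqueness clause of Theorem \ref{thm:Og} to $(L',\sC')$ identifies $e'$ as the primitive isotropic preserved by $\Aut(\sC')$, completing the verification.

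The main obstacle I anticipate is the local chamber analysis in the third paragraph: the finiteness of $\sX$ and the extension of $\Aut(\sC)$-isometries to $L'$. The first relies on the standard description of the horospherical boundary of a hyperbolic lattice at an isotropic cusp via a finite root system, and the second crucially uses that every $f\in \Aut(\sC)$ fixes $e$ pointwise rather than merely preserving its ray — exactly the content of Theorem \ref{thm:Og}. The rest of the argument reduces to straightforward lattice arithmetic.
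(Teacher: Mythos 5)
Your construction is exactly the paper's: the same $L'=L+\Z\,e'$ with $e'=e/d$, the same isotropic completion $y'=y-(y^2/2)e'$ giving $U\subset L'$ (the paper cites Keum for this), and the same appeal to the general principle that overlattices of zero entropy lattices are of zero entropy. The divergence, and the problem, is in your third paragraph, precisely the step you flag as substantive. The claimed bijection between chambers of $\sP_{L'}$ whose closure contains $e'$ and Weyl chambers of the \emph{finite} root system of $(e')^{\perp}_{L'}/\Z e'$ is false whenever that negative definite lattice contains a root — which it typically does in this setting (it is only guaranteed to have no \emph{finite-index} root sublattice). If $r\in L'$ is a root with $(r,e')=0$, then so is $r+ke'$ for every $k\in\Z$, so the walls through the cusp $e'$ form an \emph{affine} arrangement, and the chambers touching $e'$ correspond to alcoves of an affine Weyl group: infinitely many as soon as a single such root exists. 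So the reason you give for the finiteness of $\sX$ does not hold. The finiteness of $\sX$ is in fact true, but for a different reason that genuinely uses the restriction $\sC''\subset\sC$: since $L'/L$ is cyclic and generated by the class of $e'$, every root of $L'$ orthogonal to $e'$ differs from a root of $L$ by an integer multiple of $e'$, so the two affine arrangements at the cusp have the same hyperplane directions and the $L'$-arrangement refines each $L$-alcove into finitely many pieces. Without this (or an equivalent) argument, the finite-index stabilisation of $\sC'$, and hence the infinitude of ${\rm Aut}(\sC')$, is not established.

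For comparison, the paper avoids the chamber analysis entirely: it notes that $U^{\perp}_{L'}\cong e^{\perp}_L/\langle e\rangle$ has no finite-index root sublattice, so by Nikulin's criterion (quoted in Section \ref{sec:lattices}) $L'$ is not $2$-reflective, i.e.\ ${\rm O}^+(L')/W(L')\cong {\rm Aut}(\sC')$ is infinite, and Theorem \ref{thm:Og} then produces the zero entropy triple directly. That route is shorter and sidesteps the delicate point above; I would recommend adopting it, or else supplying the alcove-refinement argument in full.
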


\begin{proof}
Since $L$ does not contain $U$ and $e\in L$ is primitive  and isotropic, it follows that $\{(e, x)| \; x \in L\}=l \,\Z$ for some integer $l>1$. Then $L^\prime:=\langle \frac{e}{l}, L\rangle\subset L^*$ is a well-defined even overlattice of $L$ and $e^\prime := \frac{e}{l}$ is a primitive isotropic element with $(e^\prime, y)=1$ for some $y\in L^\prime$ (cf. \cite[Lemma 2.1]{Ke00}). Thus, $L^\prime $ contains $\langle e^\prime, y\rangle=U$ and $U_{L^\prime}^\perp\cong e_L^\perp/\langle e\rangle$ has no finite index root sublattices. Hence, $L^\prime$ is not $2$-reflective and $(L^\prime, \sC^\prime, e^\prime)$ is a zero entropy triple for some chamber $\sC^\prime$.
\end{proof}


\section{Critical sublattices}\label{sec:criticalsub}
Let $(L, \sC, e)$ be a zero entropy triple and $n:={\rm rk}(L)$. We define the {\it critical} sublattice of $L$, denoted by $L_{cr}$, to be the intersection of all rank $n$ zero entropy sublattices of $L$. We set 
$$\sS_L:=\{w(c) |\; c \text{ is a }\sC \text{-irreducible root with }(c,e)>0, w\in W(L)\}.$$
In this  section, we show that the critical sublattice $L_{cr}\subset L$ is generated by $\sS_L$ and $L_{cr}$ is of zero entropy (Theorem \ref{thm:S}). Moreover, we derive some consequences (Theorems \ref{thm:S=L}, \ref{thm:SinL}, \ref{thm:overlattice}) which are frequently used during our classification for $\sI$.

\begin{lemma}\label{lem:eC}
Let $L_1\subset L$ be a sublattice of rank $n$. If $\sS_L\cap L_1\neq \sS_L$, then $L_1$ is of positive entropy. \end{lemma}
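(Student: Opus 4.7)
The plan is to prove the contrapositive: assuming $L_1$ is of zero entropy, I will show $\sS_L\subset L_1$. First I would set up the chamber geometry. Since $L_1\subset L$ has full rank $n$, we have $[L:L_1]<\infty$ and $\sP_L=\sP_{L_1}$; the inclusion $\sR_{L_1}\subset\sR_L$ makes each $L_1$-chamber a union of $L$-chambers, and so there is a unique $L_1$-chamber $\sC_1$ containing $\sC$. Picking $f\in{\rm Aut}(\sC)$ of infinite order (available since $(L,\sC,e)$ is a zero entropy triple, by Theorem \ref{thm:Og}), finiteness of the set of sublattices of $L$ of index $[L:L_1]$ yields $N\ge 1$ with $f^N(L_1)=L_1$, and then $f^N(\sC_1)=\sC_1$ because $f^N(\sC)=\sC\subset\sC_1$. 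Thus $f^N|_{L_1}\in{\rm Aut}(\sC_1)$ has infinite order, making ${\rm Aut}(\sC_1)$ infinite.

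Next, Theorem \ref{thm:Og} applied to $L_1$ produces a unique primitive isotropic $e_1\in L_1\cap\overline{\sC_1}$ fixed by ${\rm Aut}(\sC_1)$. Since $f^N$ is parabolic on both $L$ and $L_1$ with the same action on $L\otimes\R=L_1\otimes\R$, and such a parabolic has a unique invariant isotropic ray, we get $\R_{\ge 0}e=\R_{\ge 0}e_1$; primitivity then forces $e_1=ke$ for some $k\in\Z_{\ge 1}$, and in particular ${\rm Aut}(\sC_1)$ fixes $e$. For a general $c=w(c_0)\in\sS_L$, the sublattice $w^{-1}(L_1)$ is again full-rank and isometric (via $w$) to $L_1$, hence still of zero entropy, and proving $c_0\in w^{-1}(L_1)$ is equivalent to $c\in L_1$; so it suffices to show that every $\sC$-irreducible root $c\in L$ with $(c,e)>0$ lies in $L_1$.

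For the main step, suppose for contradiction such a $c$ satisfies $c\notin L_1$. Then $c^\perp$ is not a wall of the $L_1$-chamber $\sC_1$, so $s_c(\sC)\subset\sC_1$ is a second $L$-chamber inside $\sC_1$ whose preferred isotropic $s_c(e)=e+(c,e)c$ differs from $e$. The plan is to form the two $L$-parabolics $f^N$ (fixed ray $\R_{\ge 0}e$) and $g:=s_cf^Ns_c^{-1}$ (fixed ray $\R_{\ge 0}s_c(e)$); by finiteness of the set of index-$[L:L_1]$ sublattices applied to both $L_1$ and $s_c(L_1)$, the powers $f^{Nr}$ and $g^{r}$ lie in $\Aut(L_1)$ for a common $r\ge 1$. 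These yield two parabolic isometries of $L_1$ with distinct fixed cusps, so a ping-pong argument produces a hyperbolic element of $\Aut(L_1)$; examining its class in ${\rm Aut}(\sC_1)$ through the decomposition $\Aut(L_1)=W(L_1)\rtimes{\rm Aut}(\sC_1)$ yields a positive-entropy element of ${\rm Aut}(\sC_1)$, contradicting zero entropy of $L_1$.

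The hard part will be this final construction. The reflection $s_c$ is not in $\Aut(L_1)$ (precisely because $c\notin L_1$), so the conjugate parabolic $g$ is obtained only in $\Aut(L)$ and must be pushed into $\Aut(L_1)$ via a suitable power. Moreover, after ping-pong produces a hyperbolic element of $\Aut(L_1)$, one must verify that its spectral radius survives reduction modulo $W(L_1)$ to yield an element of ${\rm Aut}(\sC_1)$ of positive entropy, rather than being absorbed into the reflection part. Correctly managing this interplay between $W(L_1)$ and ${\rm Aut}(\sC_1)$, together with the bookkeeping of powers needed to stay inside $\Aut(L_1)$, is the technical heart of the argument.
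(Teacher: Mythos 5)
Your setup is sound and closely parallels the paper's: the reduction to $w=\id$, the observation that $\sC$ and $s_c(\sC)$ lie in a single $L_1$-chamber $\sC_1$ precisely because $c\notin L_1$, and the extraction of powers of $f$ and of $g=s_cf^Ns_c^{-1}$ that preserve $L_1$. The genuine gap is your final step. Ping-pong hands you a hyperbolic element of ${\rm O}^+(L_1)$, but that by itself proves nothing: for a $2$-reflective hyperbolic lattice of rank $\ge 3$ the Weyl group $W(L_1)$ has finite index in ${\rm O}^+(L_1)$ and is therefore a non-elementary discrete group containing plenty of hyperbolic isometries, even though ${\rm Aut}(\sC_1)$ is finite and $L_1$ is of zero entropy. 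So ``hyperbolic element of ${\rm O}^+(L_1)$'' does not imply positive entropy, and the passage you defer --- checking that the spectral radius ``survives reduction modulo $W(L_1)$'' --- is not a technicality but the entire content; as written the proof does not close.

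The fix is already contained in your own construction and makes the ping-pong unnecessary. You arranged $g^r(L_1)=L_1$; since $g^r$ fixes the $L$-chamber $s_c(\sC)\subset\sC_1$ and permutes $L_1$-chambers, it fixes $\sC_1$, so $g^r|_{L_1}\in{\rm Aut}(\sC_1)$ just as $f^{Nr}|_{L_1}$ is. Hence ${\rm Aut}(\sC_1)$ contains two infinite-order elements fixing the two \emph{distinct} primitive isotropic classes $N_1e$ and $N_2s_c(e)$ in $L_1\cap(\overline{\sC_1}\setminus\{0\})$, i.e.\ both stabilizers ${\rm Aut}(\sC_1)_{N_1e}$ and ${\rm Aut}(\sC_1)_{N_2s_c(e)}$ are infinite. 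This contradicts the uniqueness clause of Theorem \ref{thm:Og} applied to $(L_1,\sC_1)$, with no hyperbolic element ever constructed; this is exactly how the paper concludes (quoting \cite{Yu18} for the infinitude of the two stabilizers). Alternatively, if you do want the ping-pong route, the same observation rescues it: the group generated by the two parabolics lies entirely inside ${\rm Aut}(\sC_1)$, so any hyperbolic element it contains is automatically an element of ${\rm Aut}(\sC_1)$ of positive entropy and the feared interaction with $W(L_1)$ never arises. Either way, you must first put both parabolics into ${\rm Aut}(\sC_1)$; without that step the argument fails.
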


\begin{proof}
By the assumption, there exists $w\in W(L)$ and a $\sC$-irreducible root $c$ such that $(c,e)>0$ and $w(c)\notin L_1$. Note that it suffices to consider the case $w={\rm id}_{L}$. Let $h\in L\cap \sC$. Recall that $\sC$ is a chamber of the positive cone $\sP_L$. Since the root hyperplane $c^\perp$ is a wall of $\sC$, there exists a chamber $\sC_1\subset \sP_L$ and $h_1\in L\cap\sC_1$ such that 
$$\{r\in L|\, r^2=-2, (h,r)(h_1,r)<0\}=\{\pm c\}.$$
Then the reflection $s_{c}\in W(L)$ defined by $c$ satisfies $s_c(\sC)=\sC_1$. Since $e\in\overline{\sC}$, we have $$s_c(e)\in s_c(\overline{\sC})=\overline{\sC_1}.$$ Since $L$ is an overlattice of $L_1$, there exist positive integers $N_1, N_2$ such that $N_1 e$ and $N_2 s_c(e)$ are primitive elements in $L_1$. Since $c\notin L_1$, it follows that $\sC$ and $\sC_1$ are contained in the same chamber, say $\sD$, of the positive cone $\sP_{L_1}$. Then, $N_1e$ and $N_2 s_c(e)$ are two different primitive isotropic elements contained in $L_1\cap(\overline{\sD}\setminus \{0\})$ such that ${\rm Aut}(\sD)_{N_1 e}$ and  ${\rm Aut}(\sD)_{N_2 s_c(e)}$ are infinite (see \cite[Lemmas 2.1, 5.4]{Yu18}). By Theorem \ref{thm:Og}, $L_1$ is of positive entropy.\end{proof}

The following characterization of critical sublattices plays a key role in our classification for $\sI$.
\begin{theorem}\label{thm:S}
Let $S\subset L$ be the sublattice generated $\sS_L$ and let $L_1\subset L$ be a sublattice of rank $n$. Then $L_1$ is of zero entropy if and only if $S\subset L_1$. In particular, $L_{cr}=S$.
\end{theorem}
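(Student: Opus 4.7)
The forward implication is immediate by contraposing Lemma~\ref{lem:eC}: if $L_1\subset L$ has rank $n$ and is of zero entropy, then $\sS_L\subset L_1$, hence the sublattice $S$ generated by $\sS_L$ also lies in $L_1$. In particular this already gives $S\subset L_{cr}$.

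For the converse, I plan to reduce the question to showing that $S$ itself is a rank-$n$ zero entropy sublattice of $L$. Once this is in place, any rank-$n$ sublattice $L_1$ with $S\subset L_1\subset L$ is an overlattice of $S$, and hence of zero entropy by the principle recalled in Section~\ref{sec:lattices} that overlattices of zero entropy lattices are of zero entropy. The identity $L_{cr}=S$ then follows at once: we already noted $S\subset L_{cr}$, while the opposite inclusion holds because $S$ is itself one of the sublattices in the intersection defining $L_{cr}$.

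To establish $\mathrm{rk}(S)=n$: the $\sC$-irreducible roots $r$ with $(r,e)=0$ descend to roots of the negative definite quotient $e^\perp/\langle e\rangle$, which has no finite-index root sublattice by Theorem~\ref{thm:Og}, so these contribute less than rank $n-1$ to $S$. The $W(L)$-orbit of a single $\sC$-irreducible root $d$ with $(d,e)>0$ should supply the missing directions---in particular, some $\Q$-linear combination of Weyl translates of $d$ recovers $e$, giving $e\in S\otimes\Q$. To establish that $S$ is of zero entropy, I would take the chamber $\sC_S$ of $\sP_S$ containing $\sC$ (which exists because roots of $S$ form a subset of roots of $L$) and argue that $\mathrm{Aut}(\sC_S)$ preserves the direction $\langle e\rangle\subset S\otimes\Q$: any isometry of $S$ preserving $\sC_S$ must permute the $\sC_S$-irreducible roots of $S$, hence permute the distinguished subset $\sS_L$, and the rigidity of the Weyl-theoretic recipe singling out $e$ from $(\sC,\sS_L)$ then forces $f$-invariance of $e$. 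A chain of reflections along roots in $\sS_L\subset S$ together with the constraint $f(e)=e$ should then confine the spectrum of $f$ to the unit circle.

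The main obstacle will be this last step. Unlike the contrapositive of Theorem~\ref{thm:Og} exploited in Lemma~\ref{lem:eC} (two distinct infinite-stabilizer isotropics force positive entropy), here I need a \emph{positive} criterion for zero entropy on $S$. A priori an $f\in\mathrm{Aut}(\sC_S)$ could be of positive entropy while still fixing $e$, by placing $e$ in the cyclotomic part of its eigenvalue decomposition; ruling this out requires sharp control coming from the fact that the full $W(L)$-orbit $\sS_L$ lies in $S$, which forces the transverse dynamics to be cyclotomic. Making this last implication rigorous is where the real work will lie.
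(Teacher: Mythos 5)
Your forward implication and the overall reduction are exactly right and coincide with the paper's: by the contrapositive of Lemma~\ref{lem:eC} every rank-$n$ zero entropy sublattice contains $\sS_L$ and hence $S$, and for the converse it suffices to show that $S$ itself is of zero entropy, since any rank-$n$ $L_1\supset S$ is then an overlattice of a zero entropy lattice. (You are also right to flag that $\mathrm{rk}(S)=n$ must be known for this reduction to make sense --- the paper uses it silently --- though your argument for it is only heuristic.) The genuine gap is in the core step, proving that $S$ is of zero entropy. First, there is no reason an isometry $f$ of $S$ preserving a chamber $\sC_S\subset\sP_S$ should permute the subset $\sS_L$ of the roots of $S$: the set $\sS_L$ is singled out by data external to $S$ (the ambient lattice $L$, the chamber $\sC$, and $e$ itself), while $f$ only permutes the full set of roots of $S$, which may strictly contain $\sS_L$. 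With that, the ``rigidity of the Weyl-theoretic recipe'' recovering $e$ has nothing to act on. Second, you have misplaced the difficulty: the step you call ``the real work'' --- excluding a positive entropy $f$ that fixes $e$ --- is standard. A positive-entropy isometry of a hyperbolic lattice has simple real eigenvalues $\lambda^{\pm1}$ with $\lambda>1$, and any fixed vector pairs to zero with both eigenvectors, hence lies in a negative definite subspace and cannot be a nonzero isotropic vector. So $f(e)=e$ already forces zero entropy; the hard part is proving $f(e)=e$, which is precisely where your sketch is vague.

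The paper's route supplies the missing idea by leaving $S$ and exploiting $L$: assume $S$ is of positive entropy and pick a positive-entropy $f_S\in{\rm O}^+(S)$ preserving a chamber $\sD\subset\sP_S$; since $S$ has finite index in $L$, a sufficiently large power of $f_S$ extends to $f_L\in{\rm O}^+(L)$. Every $\sC$-irreducible root $c$ with $(c,e)>0$ lies in $\sS_L\subset S$, so its hyperplane cannot separate $h$ from $f_L(h)=f_S(h)$ (both lie in the single chamber $\sD$ of $\sP_S$); hence the only walls of $\sC$ separating $\sC$ from $f_L(\sC)$ come from roots orthogonal to $e$. A finite chain of reflections in such roots returns $f_L(h)$ to $\sC$ while fixing $e$, and Theorem~\ref{thm:Og} applied to the zero entropy lattice $L$ then forces $f_L(e)=e$, contradicting positive entropy of $f_S$ by the eigenvector argument above. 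Some version of this extension-to-$L$ step appears unavoidable: working purely inside $S$, as you propose, discards exactly the information (the pair $(\sC,e)$ and Theorem~\ref{thm:Og} for $L$) that pins $e$ down.
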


\begin{proof}
Clearly the Weyl group $W(L)$ preserves $\sS_L$ and $S$. If $L_1$ is of zero entropy, then by Lemma \ref{lem:eC}, $L_1$ contains $\sS_L$ and $S$. Suppose $S\subset L_1$. Then it suffices to show that $S$ is of zero entropy since $L_1$ is an overlattice of $S$. Suppose $S$ is of positive entropy.  Then there exists $f_S\in {\rm O}^+(S)$ preserving a chamber $\sD\subset \sP_S$ such that $f_S$ is of positive entropy. Since $L$ is an overlattice of $S$, replacing $f_S$ by its sufficiently large power, we may assume that $f_S$ extends to an isometry $f_L$ of $L$, i.e., $$f_L\in {\rm O}^+(L) \;\text{ and }\; f_L|S=f_S.$$ There exists a positive integer $N$ such that $Ne$ is a primitive element in $S$. Note that $f_S(Ne)\neq Ne.$ Let $$h\in S\cap L\cap \sC.$$ Without loss of generality, we may assume $h$ is contained in the chamber $\sD$. Then $f_S(h)$ (resp. $f_S(Ne)$) is contained in $\sD$ (resp. $\overline{\sD}$).  Since $L$ is of zero entropy, it follows that $f_L\in {\rm O}^+(L)$ does not preserve $\sC$. Thus, the set
$$\sA:=\{c\in L|\, c\text{ is a }\sC\text{-irreducible root with }(c,f_L(h))<0\}$$
is not empty. Note that if $c\in L$ is a $\sC$-irreducible root such that $(c,e)>0$, then $$c\in\sS_L\subset S \;\text{ and }\; (c, h)(c,f_S(h))>0,$$ which implies $(c, f_L(h))>0$. Thus, $$\sA\subset \sB:=\{c\in L|\, c\text{ is a }\sC\text{-irreducible root with }(c,e)=0\}.$$
Let $c_1\in \sA$. Then $a:=(c_1,h)>0$ and $b:=(c_1,f_L(h))<0$. Let $$h_1:=a f_L(h)-b h.$$ Then $h_1\in \sD\cap S$. Consider the reflection $s_{c_1}\in W(L)$. Thus, we have $s_{c_1} (S)=S$. Note that $s_{c_1}(h_1)=h_1.$ Then $s_{c_1}|S$ preserves the chamber $\sD$ and we have $$s_{c_1}(f_L(h))\in\sD\cap S.$$ If $s_{c_1}(f_L(h))\notin \sC$, then by similar arguments to find $c_1$, we find $c_2\in\sB$ such that $$(c_2, s_{c_1}(f_L(h)))<0 \;\text{ and } \; (s_{c_2}\circ s_{c_1})(f_L(h))\in \sD\cap S.$$ By repeating the above process, eventually we find a sequence of elements $c_1,c_2,...,c_m\in\sB$ such that $$(s_{c_m}\circ \cdots\circ s_{c_{2}}\circ s_{c_1})(f_L(h))\in \sC$$ (cf. \cite[Section 6, Proof of Theorem 1 3)]{PS71}). Then $$(s_{c_m}\circ \cdots\circ s_{c_{2}}\circ s_{c_1})(f_L(e))\in \overline{\sC}.$$ Since $L$ is of zero entropy, we have $$e=(s_{c_m}\circ \cdots\circ s_{c_{2}}\circ s_{c_1})(f_L(e))$$ by Theorem \ref{thm:Og}. Then $$(s_{c_1}\circ s_{c_2} \circ\cdots\circ s_{c_m})(e)=f_L(e),$$ which implies $e=f_L(e)$ since $s_{c_i}(e)=e$ for all $i$. This contradicts $f_S(Ne)\neq Ne.$ \end{proof}

The following basic property of critical sublattices is useful.
\begin{lemma}\label{lem:S}
Let $v\in L_{cr}$. If $r\in L$ is a root with $(v,r)=k$, then $k r\in L_{cr}$.
\end{lemma}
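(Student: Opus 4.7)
The plan is to exploit the $W(L)$-invariance of $L_{cr}$ that is built into the characterization of Theorem \ref{thm:S}. By that theorem, $L_{cr}$ is the sublattice generated by $\sS_L=\{w(c):\ c\text{ a }\sC\text{-irreducible root with }(c,e)>0,\ w\in W(L)\}$. The set $\sS_L$ is tautologically a union of $W(L)$-orbits, so $W(L)$ maps $\sS_L$ to itself and hence preserves the sublattice $L_{cr}$ it generates.

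Given the root $r\in L$ with $(v,r)=k$, I would simply apply the reflection $s_r\in W(L)$ defined by
\[
s_r(x)=x+(x,r)\,r, \qquad x\in L.
\]
Then $s_r(v)=v+kr$, and by the invariance noted above, $s_r(v)\in L_{cr}$. Since $v\in L_{cr}$ by hypothesis, subtracting yields
\[
kr = s_r(v)-v \in L_{cr},
\]
which is the desired conclusion.

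There is essentially no obstacle here: the content of the lemma is exactly a one-line consequence of Theorem \ref{thm:S} together with the fact that reflections in roots of $L$ lie in $W(L)$. The only thing worth being mildly careful about is to record explicitly that the generating set $\sS_L$ is $W(L)$-stable by construction, so that the passage from ``$W(L)$ preserves $\sS_L$'' to ``$W(L)$ preserves $L_{cr}$'' requires no further argument.
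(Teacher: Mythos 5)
Your proof is correct and is essentially identical to the paper's: both apply the reflection $s_r\in W(L)$ to get $s_r(v)=v+kr\in L_{cr}$ and subtract $v$. The only difference is that you explicitly justify the $W(L)$-invariance of $L_{cr}$ via Theorem \ref{thm:S} (a fact the paper records in the proof of that theorem), which is a reasonable bit of extra care.
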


\begin{proof}
Consider the reflection $s_{r}\in W(L)$ defined by the root $r$. Then $$v+k r=v+(v,r)r=s_r(v)\in L_{cr}.$$ Thus, $k r=s_r(v)-v\in L_{cr}.$
\end{proof}

A {\it reducible fiber component} of $e$ is a $\sC$-irreducible root $c$ with $(e,c)=0$. A {\it reducible fiber} of $e$ is a collection of reducible fiber components of $e$ such that the corresponding (extended) Dynkin graph is of type $\widetilde{A}_{i}$ ($i\ge 1$), $\widetilde{D}_{j}$ ($j\ge 4$), or $\widetilde{E}_{k}$ ($k=6,7,8$) (cf. \cite{Ni81}). The roots contained in a reducible fiber are called its {\it irreducible components}.

\begin{theorem}\label{thm:S=L}
Suppose $e$ has a $\sC$-section $d$. Let $M:=\langle e, d\rangle^{\perp}_L$.  Then $L=L_{cr}$ if one of the following conditions is true:
\begin{enumerate}
\item $e$ has a reducible fiber with at least three irreducible components;

\item The genus of $M$ contains at least two classes.
\end{enumerate}
\end{theorem}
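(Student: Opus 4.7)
The plan is to show $L \subset L_{cr}$, which combined with the obvious $L_{cr} \subset L$ yields the equality. By Theorem \ref{thm:S}, $L_{cr}$ is the sublattice generated by $\sS_L$, and since $d$ is a $\sC$-section we already have $d \in \sS_L \subset L_{cr}$. The engine is Lemma \ref{lem:S}: for $v \in L_{cr}$ and any root $r \in L$, $(v,r)r \in L_{cr}$; equivalently, $L_{cr}$ is stable under every reflection $s_r$ of $L$. The strategy is to bootstrap from $d$, iteratively pushing more and more of $L$ into $L_{cr}$.

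For condition (1), let $c_1, \ldots, c_k$ (with $k \geq 3$) be the irreducible components of the prescribed reducible fiber of $e$, and write $e = \sum_i n_i c_i$. Its extended Dynkin diagram is simply-laced (one of $\widetilde{A}_{k-1}$ with $k \geq 3$, $\widetilde{D}_k$, or $\widetilde{E}_k$), so adjacent components satisfy $(c_i, c_j) = 1$. From $1 = (d, e) = \sum_i n_i (d, c_i)$ together with the non-negativity of pairings among distinct $\sC$-irreducible roots, exactly one component (say $c_1$) satisfies $(d, c_1) = 1$ with multiplicity $n_1 = 1$, and $(d, c_i) = 0$ for all other $i$. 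Hence $c_1 = s_{c_1}(d) - d \in L_{cr}$, and iterating Lemma \ref{lem:S} along the connected simply-laced graph puts each $c_i$ into $L_{cr}$. Consequently $e = \sum n_i c_i \in L_{cr}$ and $U = \langle e, d\rangle \subset L_{cr}$. The same ``near component plus propagation'' argument applies to every other reducible fiber of $e$ (and even for an $\widetilde{A}_1$-fiber, where propagation alone yields only $2c$ for the second component, the missing vector is recovered from $e \in L_{cr}$), so the full trivial lattice $T$ of the elliptic fibration lies in $L_{cr}$. Since every $\sC$-section of $e$ lies in $\sS_L \subset L_{cr}$ and sections surject onto $L/T$ via the Mordell--Weil structure, we conclude $L = L_{cr} + T = L_{cr}$.

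For condition (2), the argument must avoid any specific fiber geometry. The idea is to analyze the inclusion $L_{cr} \subset L$ through Nikulin's correspondence between even overlattices and isotropic subgroups of the discriminant group. If $L_{cr} \subsetneq L$, then $L/L_{cr}$ is a nontrivial isotropic subgroup of $G(L_{cr})$, and the induced gluing data re-identifies the orthogonal complement of $U$ inside $L_{cr}$ with some lattice $M'$ in the genus of $M$. When the genus of $M$ contains at least two distinct classes, this flexibility can be exploited to produce two genuinely different primitive embeddings of $U$ into $L$ (equivalently, two distinct primitive isotropic vectors satisfying the conclusion of Theorem \ref{thm:Og}), contradicting the uniqueness of $e$ in the zero entropy triple $(L, \sC, e)$. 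Hence $L_{cr} = L$.

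The main obstacle is case (2): case (1) is a concrete propagation argument where one exhibits generators of $L$ inside $L_{cr}$ explicitly, while case (2) requires a global discriminant-form argument tying the quotient $L/L_{cr}$ to the arithmetic of the genus of $M$. Pinpointing precisely which compatibility in Nikulin's gluing theory fails once the genus of $M$ admits $\geq 2$ isometry classes, and turning this failure into the obstruction $L_{cr} \subsetneq L$ needed for the contradiction, is the technical heart of the proof.
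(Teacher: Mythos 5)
Your case (1) follows the paper's route for the first half: propagate membership in $L_{cr}$ from $d$ to one fiber component and then along the Dynkin graph to get all $c_i$, hence $e$, into $L_{cr}$. But your finishing move --- invoking the Mordell--Weil structure and the claim that $\sC$-sections surject onto $L/T$ --- imports geometry that is not available (and not justified) in this purely lattice-theoretic setting: nothing in the set-up guarantees that $L/T$ is generated by classes of $\sC$-irreducible roots meeting $e$ once. The paper finishes instead by observing that for any $v\in M$ the element $x:=-\frac{v^2}{2}e+d+v$ is a root with $(e,x)=1$; since $e\in L_{cr}$, Lemma \ref{lem:S} gives $x\in L_{cr}$, hence $v=x-d+\frac{v^2}{2}e\in L_{cr}$ and $L=\langle e,d\rangle\oplus M=L_{cr}$. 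This defect is fixable, but as written the last step of your case (1) is a gap.

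Case (2) is the genuine problem. You aim to contradict the ``uniqueness of $e$'' by producing two distinct primitive isotropic vectors, but Theorem \ref{thm:Og} only asserts uniqueness of the primitive isotropic vector in $L\cap(\overline{\sC}\setminus\{0\})$ with \emph{infinite} stabilizer ${\rm Aut}(\sC)_e$. A second class $M_1$ in the genus of $M$ does yield a second primitive isotropic vector $e_1$ whose complement is $M_1\ncong M$, but that is no contradiction --- it merely forces ${\rm Aut}(\sC)_{e_1}$ to be finite. The paper exploits exactly this: finiteness of ${\rm Aut}(\sC)_{e_1}$ gives $e_1$ many reducible fibers; since $(e,e_1)>0$, each reducible fiber of $e_1$ has a component $c$ with $(c,e)>0$, hence $c\in\sS_L\subset L_{cr}$; if some fiber of $e_1$ has at least three components one propagates as in (1), and otherwise a counting argument (the number of reducible fibers of $e$ is strictly smaller than that of $e_1$) produces a fiber of $e_1$ with \emph{both} components in $L_{cr}$, whence $e_1\in L_{cr}$ and one concludes as in (1). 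Your discriminant-form/gluing sketch never identifies an actual obstruction to $L_{cr}\subsetneq L$, and you concede that the ``technical heart'' is missing; as it stands, case (2) is not proved, and the intended contradiction rests on a misreading of Theorem \ref{thm:Og}.
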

\begin{proof}
Suppose (1) is true. By assumption, $e$ has a reducible fiber $\{c_1,...,c_m\}, m\ge 3$. Since $d$ is a $\sC$-section of $e$, we may assume $(d,c_1)=1$. By Theorem \ref{thm:S} and Lemma \ref{lem:S}, both $d$ and $c_1$ are contained in $L_{cr}$. By the intersection numbers $(c_i,c_j)$ and by Lemma \ref{lem:S} again, we have $c_i\in L_{cr}$ for all $i$, which implies $e\in L_{cr}$ since $e$ is a linear combination of $c_i$. Let $v\in M$. Consider the element $x:=-\frac{v^2}{2} e+d+v\in L.$ Then $x^2=-2$ and $(e,x)=1$. Thus, $x, v\in L_{cr}$. Since $L=\langle e, d\rangle\oplus M$, we have $L=L_{cr}$.

Suppose (2) is true. By assumption, there exists a primitive isotropic element $e_1 \in L\cap (\overline{\sC}\setminus\{0\})$ with  finite ${\rm Aut}(\sC)_{e_1}$. Since $(e,e_1)>0$, it follows that for any reducible fiber of $e_1$, at least one irreducible component, say $c$, has positive intersection number with $e$, which implies $c\in L_{cr}$. Thus, by similar arguments as in (1), $L=L_{cr}$ if $e_1$ has a reducible fiber with at least $3$ irreducible components. Then, we are reduced to consider the following case: all reducible fibers of $e$ and $e_1$ have exactly $2$ irreducible components. Since the automorphism group ${\rm Aut}(\sC)_e$ (resp. ${\rm Aut}(\sC)_{e_1}$) is infinite (resp. finite), it follows that the number of reducible fibers of $e$ is strictly less than that of $e_1$. Then we infer that there exists a reducible fiber of $e_1$ such that the two irreducible components are contained in $S$. From this, we have $L=L_{cr}$ by similar arguments as in (1) again. \end{proof}

By slightly adapting the proof of Theorem \ref{thm:S=L}, we have the following result which is very effective for computing critical sublattices in practice.
\begin{theorem}\label{thm:SinL}
Suppose $L$ contains $U$. Then the following statements are true:
\begin{enumerate}
\item If $e$ has no reducible fiber, then $L_{cr}$ is the sublattice generated by all roots in $L$.

\item If $e$ has a reducible fiber, then the index $[L: L_{cr}]$ is at most $2$.

\item If $L_1\subset L$ is a sublattice containing both $e$ and $L_{cr}$, then $L_1=L$. 
\end{enumerate}
In particular, the quotient group $L/L_{cr}$ is a finite cyclic group generated by $e$.
\end{theorem}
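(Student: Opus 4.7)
My plan is to prove the three assertions in order and to derive the concluding ``in particular'' statement as a consequence.

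\textbf{Part (1).} When $e$ has no reducible fiber, every $\sC$-irreducible root $c$ satisfies $(c,e)>0$, hence belongs to $\sS_L$. Since $W(L)$ acts simply transitively on the chambers of $\sP_L$ and every root lies on a wall of some chamber, every root $r\in L$ is of the form $w(c)$ for some $w\in W(L)$ and some $\sC$-irreducible root $c$ (after replacing $r$ by $-r=s_r(r)$ if needed to fix the sign). Hence $\sS_L$ coincides with the set of all roots of $L$, and Theorem \ref{thm:S} identifies $L_{cr}$ with the sublattice they generate.

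\textbf{Part (2).} If some reducible fiber has at least three components, Theorem \ref{thm:S=L}(1) gives $L=L_{cr}$. Otherwise every reducible fiber is of type $\widetilde{A}_1$; fix one such fiber $\{c_1,c_2\}$ with $(d,c_1)=1$, $(d,c_2)=0$, so that $c_1+c_2=e$ and $(c_1,c_2)=2$. Writing $c_1=e+m$ with $m\in M:=\langle e,d\rangle^{\perp}_{L}$ a root (and $c_2=-m$), Theorem \ref{thm:S} and Lemma \ref{lem:S} yield $d, c_1\in L_{cr}$, and applying Lemma \ref{lem:S} to $c_1$ and the root $c_2$ yields $2c_2\in L_{cr}$; hence $2e=2c_1+2c_2\in L_{cr}$. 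For each $v\in M$, the element $y_v:=-\tfrac{v^2}{2}e+d+v$ is a root with $(e,y_v)=1$, and Lemma \ref{lem:S} applied with the root $y_v$ to $d$, $c_1$, $2e\in L_{cr}$ (producing the integers $-\tfrac{v^2}{2}-2$, $1+(m,v)$, and $2$ as coefficients on $y_v$), combined with a parity analysis, yields $y_v\in L_{cr}+\Z e$. Consequently $v=y_v-d+\tfrac{v^2}{2}e\in L_{cr}+\Z e$, so $L=L_{cr}+\Z e$; and since $2e\in L_{cr}$, the index $[L:L_{cr}]$ is at most $2$.

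\textbf{Part (3) and the concluding statement.} The arguments for (1) and (2) both establish $L=L_{cr}+\Z e$ (in case (1), every $y_v$ is a root and hence lies in $L_{cr}$, so the same rewriting of $v$ applies). Consequently, any sublattice $L_1\subset L$ containing $L_{cr}$ and $e$ automatically contains $L_{cr}+\Z e=L$, so $L_1=L$, proving (3). The concluding statement follows at once: $L/L_{cr}=\Z\cdot\overline{e}$ is cyclic, generated by the image of $e$, and finite since $L_{cr}$ has full rank in $L$.

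\textbf{Main obstacle.} The most delicate step is the parity analysis in the $\widetilde{A}_1$ subcase of (2), specifically when $v^2\equiv 0\pmod 4$ and $(m,v)$ is odd: then all three integers $-\tfrac{v^2}{2}-2$, $1+(m,v)$, and $2$ are even, so Lemma \ref{lem:S} applied to $d$, $c_1$, $2e$ recovers only $2y_v\in L_{cr}$ and not $y_v$ itself. To reach $y_v\in L_{cr}+\Z e$ in this subcase one must invoke additional generators of $L_{cr}$, either further $\sC$-irreducible sections available in $\sS_L$ (producing new integer coefficients on $y_v$ with an odd value) or a translation $v\mapsto v+v_0$ by a carefully chosen $v_0\in M$ that moves $v$ into a case handled by the direct argument.
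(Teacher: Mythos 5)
The paper gives no written argument for this theorem beyond the remark that it follows ``by slightly adapting the proof of Theorem \ref{thm:S=L}'', so there is no detailed proof to compare yours against; I can only assess your attempt on its own terms. Your Part (1) is correct: under the hypothesis every $\sC$-irreducible root pairs strictly positively with $e$ (non-negativity comes from $e\in\overline{\sC}$), and the simply transitive $W(L)$-action on chambers together with $-c=s_c(c)$ identifies $\sS_L$ with the full set of roots, so Theorem \ref{thm:S} gives the claim. Your derivation of (3) and of the final cyclicity statement from the identity $L=L_{cr}+\Z e$ is also sound, and in case (1) that identity does follow from your observation that each $y_v$ is a root, hence lies in $L_{cr}$.

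Part (2), however, contains a genuine gap, which you yourself flag in the ``main obstacle'' paragraph. In the subcase where every reducible fiber is of type $\widetilde{A}_1$, your three applications of Lemma \ref{lem:S} place $(-\tfrac{v^2}{2}-2)y_v$, $(1+(m,v))y_v$ and $2y_v$ in $L_{cr}$, and when $v^2\equiv 0\pmod 4$ and $(m,v)$ is odd all three coefficients are even, so you only obtain $2y_v\in L_{cr}$ and hence only $2v\in L_{cr}+\Z e$. What your argument actually proves in that subcase is that $L/(L_{cr}+\Z e)$ is an elementary abelian $2$-group, i.e.\ that $[L:L_{cr}]$ divides a power of $2$ --- not that the index is at most $2$, and not that $L/L_{cr}$ is cyclic generated by $e$. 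The two repairs you gesture at are not carried out, and neither is obviously available: translating $v\mapsto v+m$ preserves the bad parity (since $(v+m)^2\equiv v^2+2(m,v)-2\equiv 0\pmod 4$ and $(m,v+m)=(m,v)-2$ stays odd), and the further $W(L)$-translates of $d$ one naturally writes down, such as $s_{y_v}(d)=d+(-\tfrac{v^2}{2}-2)y_v$, reproduce the same even coefficients. Closing the gap requires a genuinely new input --- for instance, analyzing which roots can fail to lie in $\sS_L$ (they are necessarily $W(L)$-translates of fiber components, whose classes modulo $L_{cr}$ one can try to control), or a comparison with a second isotropic class as in the proof of Theorem \ref{thm:S=L}(2). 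Since statement (3) and the concluding assertion rest on $L=L_{cr}+\Z e$ in this subcase, they inherit the same gap.
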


The next theorem shows an explicit relation between non-2-reflective zero entropy lattices and $2$-reflective lattices.
\begin{theorem}\label{thm:overlattice}
Suppose $L$ contains $U$. If $L_1$ is a nontrivial even overlattice of $L$, then $L_1$ is $2$-reflective. \end{theorem}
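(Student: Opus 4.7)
The plan is to argue by contradiction, using the critical sublattice of $L_1$ to force $L = L_1$. First I would assume that $L_1$ is not $2$-reflective; since any overlattice of a zero entropy lattice is of zero entropy, $L_1$ is still of zero entropy, so some chamber $\sC_1 \subset \sP_{L_1}$ has $\Aut(\sC_1)$ infinite, and hence $(L_1, \sC_1, e_1)$ is a zero entropy triple for a unique primitive isotropic $e_1 \in L_1$. Because $U \subset L \subset L_1$, Theorem \ref{thm:SinL} applies to $L_1$, so the critical sublattice $(L_1)_{cr}$ is well-defined and $L_1 = \langle e_1, (L_1)_{cr}\rangle$.

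The first main step is to identify $e_1$ with a distinguished isotropic vector of $L$. Every root hyperplane of $L$ is a root hyperplane of $L_1$, so $\sC_1$ lies in a unique chamber $\sD$ of $\sP_L$, and since $W(L)$ acts transitively on chambers, $\Aut(\sD)$ is infinite and $(L, \sD, e_\sD)$ is itself a zero entropy triple. I would then take $f \in \Aut(\sC_1)$ of infinite order; some power $f^N$ preserves $L$ (there are only finitely many sublattices of $L_1$ of index $[L_1:L]$), and $\sC_1 \subset \sD \cap f^N(\sD)$ forces $f^N \in \Aut(\sD)$, still of infinite order. From $f^N(e_1) = e_1$ it follows that the primitive $L$-vector $e_1^{(L)}$ on the same ray is fixed as well, so $\Aut(\sD)_{e_1^{(L)}}$ is infinite and the ``moreover'' part of Theorem \ref{thm:Og} yields $e_1^{(L)} = e_\sD$. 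Writing $e_1 = c\, e_\sD$ with $c \in \Q_{>0}$ and pairing with a $\sD$-section $d_\sD \in L$ of $e_\sD$ (which exists since $U \subset L$) gives $c = (d_\sD, e_1) \in \Z_{>0}$; the same pairing shows $e_\sD$ is primitive in $L_1$, so the primitivity of $e_1$ in $L_1$ forces $c = 1$ and $e_1 = e_\sD \in L$.

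With $e_1 \in L$ in hand, the rest is essentially automatic. The sublattice $L \subset L_1$ has full rank and is of zero entropy, so by the very definition of $(L_1)_{cr}$ as the intersection of all rank-$n$ zero entropy sublattices of $L_1$, one has $(L_1)_{cr} \subseteq L$. Hence $L$ is a sublattice of $L_1$ containing both $e_1$ and $(L_1)_{cr}$, and Theorem \ref{thm:SinL}(3) applied to the zero entropy triple $(L_1, \sC_1, e_1)$ forces $L = L_1$, contradicting the hypothesis that $L_1$ is a nontrivial overlattice.

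The main obstacle is the identification $e_1 = e_\sD$: one has to transport the distinguished isotropic vector across an overlattice whose discriminant-group behaviour is a priori uncontrolled. The hypothesis $U \subset L$ is exactly what pins this down, because the $\sD$-section $d_\sD$ supplies an integer pairing against $e_1$ that rules out all non-trivial rescalings. Once the distinguished isotropics are matched, the critical sublattice argument runs essentially for free via Theorem \ref{thm:SinL}(3).
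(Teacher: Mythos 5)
Your proposal is correct and follows essentially the same route as the paper: both produce the zero entropy triple $(L_1,\sC_1,e_1)$, use the uniqueness statement of Theorem \ref{thm:Og} together with a section of the distinguished isotropic vector of $L$ (this is where $U\subset L$ enters) to force $e_1\in L$, and then conclude $L=L_1$ from $(L_1)_{cr}\subset L$ via Theorem \ref{thm:SinL}(3). The only cosmetic difference is that the paper normalizes the chambers so that $\sC_1$ meets $\sC$, whereas you work with the chamber $\sD\supset\sC_1$ and its isotropic vector $e_\sD$; your intermediate steps (passing to a power of $f$ preserving $L$, rescaling by the section pairing) just spell out the paper's terser "no section" contradiction.
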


\begin{proof}
Suppose ${\rm O}^+(L_1)/W(L_1)$ is infinite. Then there exists a chamber $\sC_1$ and $e_1\in L$ such that $(L_1, \sC_1, e_1)$ is a zero entropy triple. Without loss of generality, we may assume that there exists $h\in L$ such that $h$ is contained in both $\sC$ and $\sC_1$. Note that $e$ has $\sC$-sections. If $e_1\notin L$, then for some integer $N>1$, $Ne_1$ is a primitive isotropic element in $L\cap(\overline{\sC}\setminus\{0\})$ with infinite ${\rm Aut}(Y)_{Ne_1}$ and no section, a contradiction (Theorem \ref{thm:Og}). So we have $e_1\in L$. Since $L$ contains both $e_1$ and the critical sublattice of $L_1$, we have $L=L_1$ by Theorem \ref{thm:SinL}. Thus, $L_1$ is a trivial overlattice of $L$, a contradiction. \end{proof}

The following lemma shows that Theorem \ref{thm:overlattice} can be applied to many lattices.
\begin{lemma}\label{lem:existoverlattice}
Let $L_1$ be an even hyperbolic lattice and let $p$ be a prime number. Then $L_1$ has an even overlattice $L_2$ with $[L_2:L_1]=p$ if one of the following two statements holds:

\begin{enumerate}
\item $p$ is odd and $p^3$ divides ${\rm det}(L_1)$;

\item $p=2$ and $16$ divides ${\rm det}(L_1)$.
\end{enumerate}
\end{lemma}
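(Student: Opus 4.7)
The plan is to translate the existence of an even overlattice of index $p$ into an isotropy condition on the discriminant form and then to exploit the local structure of $q_{L_1}$ at $p$. By the correspondence recalled in Section \ref{sec:lattices}, an even overlattice $L_2 \supset L_1$ with $[L_2:L_1]=p$ is the same datum as an isotropic element $\bar{x}\in G(L_1)$ of order exactly $p$, i.e.\ $p\bar{x}=0$, $\bar{x}\neq 0$, and $q_{L_1}(\bar{x})=0\in \Q/2\Z$. Since $(G(L_1),q_{L_1})$ splits orthogonally over its primary components, I would work entirely inside the $p$-primary part $G_p\subset G(L_1)$, where the hypothesis becomes $|G_p|\geq p^3$ for odd $p$ and $|G_2|\geq 16$ for $p=2$.

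For odd $p$, I would apply the classical orthogonal decomposition of $(G_p,q_p)$ into rank-one cyclic pieces $\Z/p^{a_i}$ with generator $g_i$ satisfying $q(g_i)=2c_i/p^{a_i}$ for a unit $c_i$. If some $a_i\geq 2$, then $p^{a_i-1}g_i$ has order $p$ and $q(p^{a_i-1}g_i)=2c_ip^{a_i-2}$ is an even integer, hence zero in $\Q/2\Z$. Otherwise every $a_i$ equals one, forcing $G_p\cong (\Z/p)^r$ with $r\geq 3$; then $(G_p,q_p)$ is a non-degenerate quadratic space over $\F_p$ of dimension at least three, which is automatically isotropic by Chevalley--Warning (or the classification of quadratic forms over $\F_p$).

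For $p=2$, I would invoke the standard Jordan-type decomposition of $(G_2,q_2)$ for even lattices (see e.g.\ \cite{Ni80}, \cite{CS99}) into orthogonal blocks of three kinds: rank-one cyclic pieces $\Z/2^k$ with $q(g)=m/2^k$ for odd $m$, the hyperbolic rank-two blocks $u_k$ on $(\Z/2^k)^2$, and the twisted rank-two blocks $v_k$ on $(\Z/2^k)^2$. A direct computation on each block shows that any $u_k$ (any $k\geq 1$), any $v_k$ with $k\geq 2$, or any cyclic $\Z/2^k$ with $k\geq 3$ already contributes an isotropic element of order two, so one is done whenever such a block is present. The residual case is that every block of $G_2$ is among $\Z/2$, $\Z/4$, or $v_1$; writing $n_1,n_2,n_3,n_4$ for the multiplicities of the four types $(\Z/2,\,q=1/2)$, $(\Z/2,\,q=3/2)$, $\Z/4$ and $v_1$, the hypothesis reads $n_1+n_2+2n_3+2n_4\geq 4$, and one finishes with a short combinatorial argument: activating subsets of blocks contributes $1/2$, $3/2$, or $1$ to the total $q$-value, and the divisibility $k_1-k_2+2(k_3+k_4)\equiv 0\pmod 4$ that characterizes vanishing mod $2\Z$ can always be realized by a non-empty selection under the given inequality.

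The main obstacle will be the $p=2$ analysis, both because the decomposition theory is more intricate than for odd $p$ and because one must match the sharp threshold $16$; indeed the hyperbolic example $U\oplus \langle 2\rangle^{\oplus 3}$ has $|G(L_1)|=8$ and admits no even overlattice of index two (all non-zero order-two elements have $q\in\{1/2,1,3/2\}$), explaining why $|G_2|=8$ does not suffice and why no further weakening of the lemma is possible. Once the structure theorems of Nikulin are in hand, the residual combinatorial verification is elementary.
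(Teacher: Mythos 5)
Your proposal is correct and follows essentially the same route as the paper: both reduce the existence of an index-$p$ even overlattice to finding a nonzero order-$p$ element of the discriminant form with $q=0$, and both locate such an element via the Jordan-block normal forms of finite quadratic forms (Nikulin's Propositions 1.8.1--1.8.2, cited in the paper's sketch). The paper leaves the block-by-block verification as a cited ``direct calculation,'' which your case analysis -- the Chevalley--Warning step for odd $p$ and the residual $2$-adic combinatorics, both of which check out -- carries through correctly.
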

\begin{proof}
We sketch the proof. For (1) (resp. (2)), there exists a subgroup $H\subset G(L_1)$ such that $|H|=p^3$ (resp. $|H|=16$). Then a direct calculation shows that there exists an order $p$ isotropic element $x\in H$ such that $q_{L_1}(x)=0$ (cf. \cite[Propositions 1.8.1, 1.8.2]{Ni80}, \cite[Corollary 2.53]{Ge08}). This implies that there is an even overlattice $L_2$ of $L_1$ with $[L_2 : L_1]=p$.
\end{proof}

\begin{theorem}\label{thm:atleast13}
Let $L_1$ be an even hyperbolic lattice of rank at least $13$. If $p^3$ divides ${\rm det}(L_1)$ for some odd prime number $p$, then $L_1$ is of positive entropy.
\end{theorem}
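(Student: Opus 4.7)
The plan is proof by contradiction: suppose $L_1$ is of zero entropy and derive an impossible $2$-reflective overlattice.

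First, I would apply Lemma \ref{lem:existoverlattice}(1) to produce an even overlattice $L_2 \supsetneq L_1$ of index $p$, which is legitimate since $p$ is odd and $p^3 \mid \det(L_1)$. Because $L_2$ is an overlattice of the zero entropy lattice $L_1$, it is itself of zero entropy (Section \ref{sec:lattices}), of the same rank $\geq 13$, and with $\det(L_2)=\det(L_1)/p^2$.

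Second, I would force $2$-reflectivity via Theorem \ref{thm:overlattice}, splitting on whether $\Aut(\sC)_{L_1}$ is infinite. In the infinite case $(L_1, \sC, e)$ is a zero entropy triple, and after possibly replacing $L_1$ by a zero entropy overlattice containing $U$ via Lemma \ref{lem:overlattice} (taking care to keep $p^3 \mid \det$, for instance by iterating Lemma \ref{lem:existoverlattice} beforehand so that the $p$-adic valuation of the determinant remains large enough to survive the index of the Lemma \ref{lem:overlattice} reduction), Theorem \ref{thm:overlattice} applied to the nontrivial even overlattice $L_2 \supsetneq L_1$ yields that $L_2$ is $2$-reflective. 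In the finite case $\Aut(\sC)_{L_1}$ is finite, so $L_1$ itself is already $2$-reflective.

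Either branch therefore exhibits a $2$-reflective even hyperbolic lattice of rank $\geq 13$ whose determinant still carries a nontrivial odd prime power factor. The main obstacle, and really the heart of the argument, is to close off this possibility. I would appeal to the classification of $2$-reflective even hyperbolic lattices of rank at least $13$ (due to Nikulin, implicit in the description of $\sF$ used earlier in the paper), enumerate the finite list of admissible discriminants, and verify by inspection that none is divisible by the cube of an odd prime. This classification input is the technical crux; the preceding reductions are routine combinations of Lemmas \ref{lem:overlattice} and \ref{lem:existoverlattice} with Theorem \ref{thm:overlattice}.
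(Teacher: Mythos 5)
Your overall strategy---produce an index-$p$ even overlattice via Lemma \ref{lem:existoverlattice}, force it to be $2$-reflective via Theorem \ref{thm:overlattice}, and contradict Nikulin's classification of $2$-reflective lattices in rank $\ge 13$---is the right skeleton and matches the paper's. But there is a genuine gap at exactly the point you flag as needing ``care'': Theorem \ref{thm:overlattice} requires the base lattice of the zero entropy triple to contain $U$, and your proposed workaround does not close the hole. Passing from $L_1$ to an overlattice $L_1'\supset U$ via Lemma \ref{lem:overlattice} divides the determinant by $l^2$ for an uncontrolled index $l$, which may absorb all the $p$-divisibility; and ``iterating Lemma \ref{lem:existoverlattice} beforehand'' only produces further overlattices, which \emph{decrease} the $p$-adic valuation of the determinant rather than protect it. Moreover, once you replace $L_1$ by $L_1'$, the overlattice $L_2\supsetneq L_1$ built in your first step need not contain $L_1'$, so Theorem \ref{thm:overlattice} cannot be applied to the pair $(L_1',L_2)$ in any case.

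The missing idea, which is the actual crux of the paper's proof, is that $L_1$ \emph{already contains} $U$, so no determinant is lost. The paper argues: since ${\rm det}(L_1)$ is divisible by the odd prime $p$ and the $2$-reflective even hyperbolic lattices of rank $\ge 13$ have determinant a power of $2$ (Nikulin), $L_1$ is not $2$-reflective and hence sits in a zero entropy triple; Lemma \ref{lem:overlattice} then gives a zero entropy overlattice $L_2=U\oplus M$. Because ${\rm rk}(M)\ge 11$, Watson's theorem \cite[Theorem 1]{Wa63} says the genus of $M$ contains more than one class, so Theorem \ref{thm:S=L}(2) gives $L_2=(L_2)_{cr}$; since the critical sublattice lies in every full-rank zero entropy sublattice, $(L_2)_{cr}\subset L_1$, whence $L_2=L_1$. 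Only now does one build the index-$p$ overlattice $L_3$, whose determinant ${\rm det}(L_1)/p^2$ is still divisible by $p$ and hence not a power of $2$, contradicting Theorem \ref{thm:overlattice}. Note also that your final verification is stated too weakly: after dividing by $p^2$ the overlattice's determinant is only guaranteed to be divisible by $p$, not by $p^3$, so you need the full strength of Nikulin's statement (no odd prime divides the determinant at all in rank $\ge 13$), not merely the absence of odd cube factors.
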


\begin{proof}
Suppose that $L_1$ is of zero entropy. By \cite{Ni81}, the determinants of the $2$-reflective even hyperbolic lattices with rank at least $13$ are powers of $2$.  Thus, $L_1$ is not $2$-reflective. By Lemma \ref{lem:overlattice}, there exists a non-$2$-reflective even overlattice $L_2$ of $L_1$ such that $L_2$  is of zero entropy and contains $U$. Then $L_2=U\oplus M$, where $M$ is an even negative definite lattice. Since ${\rm rk}(M)\ge 11$, the genus of $M$ is not a one-class genus by \cite[Theorem 1]{Wa63}. Hence, by Theorem \ref{thm:S=L}, we have $L_2=(L_2)_{cr}=L_1$. Then by $p^3\;| \; {\rm det}(L_1)$ and Lemma \ref{lem:existoverlattice}, we have that $L_1$ has an even overlattice $L_3$ with $[L_3:L_1]=p$. Then $p\;|\; {\rm det}(L_3)=\frac{{\rm det}(L_1)}{p^2}.$
Hence, by \cite{Ni81} again, $L_3$ is not $2$-reflective. However, $L_3$ is $2$-reflective by Theorem \ref{thm:overlattice}, a contradiction.
\end{proof}

\begin{remark}\label{rem:2-elem}
Let $L_1$ be a $2$-elementary even hyperbolic lattice with $12\le {\rm rk}(L_1)\le 18$ and ${\rm rk}(L_1)+l(L_1)=22$. In \cite{OY19}, it is proved that $L_1$ is of positive entropy by geometric arguments using elliptic fibrations. Here we give a different proof based on Theorem \ref{thm:overlattice}. Note that $L_1$ contains $U$. By $l(L_1)\ge 4$ and Lemma \ref{lem:existoverlattice}, $L_1$ has an even overlattice $L_1^\prime$ with $[L_1^\prime :L_1]=2$. Thus, $L_1^\prime$ is $2$-elementary and ${\rm rk}(L_1^\prime)+l(L_1^\prime)=20$. Hence, $L_1^\prime$ is not $2$-reflective by \cite{Ni81}. Then by Theorem \ref{thm:overlattice}, $L_1$ is of positive entropy. 
\end{remark}

The $18$ rank $3$ lattices in $\sI$ were obtained in \cite[Section 3.3]{Ni99} based on Vinberg's algorithm (for the $8$ lattices containing $U$, see \cite{Me21} for a different approach).  So the results in the following example is not new. However, the example provides an illustration for computing critical sublattices.
\begin{example}\label{exm:rk3}
The set of all rank $3$ non-2-reflective zero entropy lattices containing $U$ is $\{L_{k}:=U\oplus [-2k]| \; k=2, 3, 4, 5, 7, 9, 13, 25\}$. Let $L:=L_2$. Choose a basis $e_1, e_2, v$ of $L$ with $e_1^2=e_2^2=(e_1,v)=(e_2,v)=0,  (e_1,e_2)=1, v^2=-4$. By Theorem \ref{thm:SinL} (1), $L_{cr}$ is generated by all roots in $L$. Let $r:=(x,y,z)\in L$, where $x,y,z$ are coordinates with respect to the basis $e_1, e_2, v$. If $r^2=-2$, then $xy-2z^2=-1$, and $x, y$ are odd integers. Consider the roots $r_1:=(1,-1,0)$, $r_2:=(1,1,1)$, $r_3:=(1,1,-1)$. Since $4e_1=2r_1+r_2+r_3$,  by Theorem \ref{thm:SinL}, $[L:L_{cr}]=1, 2$, or $4$. On the other hand, consider the group homomorphism 
$$\varphi: L\longrightarrow \Z/4\Z\oplus \Z/4\Z\oplus \Z/2\Z,\;\; \varphi ((x,y,z))=(\overline{x}, \overline{y}, \overline{z}).$$ 
If $r^2=-2$, then  $\varphi(r)\in H:=\langle (\overline{1},\overline{-1},\overline{0}), (\overline{1},\overline{1},\overline{1}) \rangle$. Clearly $\varphi (2e_1)\notin H$. Thus, $2e_1\notin L_{cr}$. Then $[L:L_{cr}]=4$, $L_{cr}=\langle r_1, r_2, r_3\rangle$, and $L$ has exactly three  rank $3$ zero entropy sublattices: $L_{cr}$, $\langle r_1, r_2, 2e_1\rangle$, $L$. Similarly, we obtain all rank $3$ zero entropy sublattices of $L_k$, and $[L_k:(L_k)_{cr}]=4, 6, 8, 1, 2, 3, 1, 1$ for $k=2, 3, 4, 5, 7, 9, 13, 25$ respectively.\end{example}


\section{Tests for entropy-positivity of lattices}\label{sec:tests}
In general it is hard to determine that a lattice is of zero/positive entropy because the number of roots in it may be infinite. In this section, we collect some tests for entropy-positivity of lattices which will be used in the next section.

Let $L:=U\oplus M$, where $M$ is an even negative definite lattice of rank $m \ge 2$. In practice, we use combination of the following tests to check entropy-positivity of $L$.

\subsection*{Overlattice test} The overlattice test is based on Theorem \ref{thm:overlattice}. Let $\sF^{m+2}$ be the finite list of all $2$-reflective even hyperbolic lattices of rank $m+2$. We compute the finite set $\sM$ which consists of the isometry classes of even overlattices $M^\prime \supset M$ with prime index $[M^\prime: M]$. If there exists $M^\prime \in \sM$ such that $U\oplus M^\prime \notin\sF^{m+2}$ (it suffices to compare the genus of $U\oplus M^\prime$ with the genera in $\sF^{m+2}$ using Sage \cite{The} since they are one-class genera), then output that $L$ is of positive entropy and stop. If there is no such $M^\prime$, then output that unclear and stop.

\subsection*{Sublattice test} The sublattice test is based on the following theorem which is a generalization of \cite[Theorem 3.6]{Me21}. 

\begin{theorem}\label{thm:subtest}
Let $L$ be an even hyperbolic lattice of rank $n$ and let $L_1\subset L$ be a primitive hyperbolic sublattice of positive entropy. Then $L$ is of positive entropy if one of the following two statements holds:
\begin{enumerate}
\item  ${\rm rk}(L_1)=n-1$ and $|\frac{{\rm det}(L)}{{\rm det}(L_1)}|\ge 2$;

\item $L=L_1\oplus (L_1)_L^\perp$.
\end{enumerate}
\end{theorem}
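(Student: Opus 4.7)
The plan is, in both cases, to construct an explicit element $g \in {\rm Aut}(\sC) \subset {\rm O}^+(L)$ of positive entropy by extending a suitable power of the witness $f_1 \in {\rm Aut}(\sD_1)$ of positive entropy on $L_1$. In case (2), the orthogonal splitting $L = L_1 \oplus L_1^\perp$ allows the direct definition $g := f_1 \oplus {\rm id}_{L_1^\perp}$, whose characteristic polynomial $\chi_g(x) = \chi_{f_1}(x)(x-1)^{{\rm rk}(L_1^\perp)}$ still contains the Salem factor of $\chi_{f_1}$. In case (1), let $v$ be a primitive generator of the rank-one lattice $L_1^\perp$ and set $k := [L : L_1 \oplus \langle v\rangle]$; writing $v^2 = -2a$, the identity $\det(L_1)\cdot v^2 = k^2\det(L)$ combined with $|\det(L)/\det(L_1)| \geq 2$ yields $a \geq k^2$. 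Let $N$ be the order of $\overline{f_1}$ acting on $G(L_1)$; then $f_1^N \oplus {\rm id}_{\langle v\rangle}$ acts trivially on $G(L_1 \oplus \langle v\rangle)$ and preserves the isotropic subgroup $L/(L_1 \oplus \langle v\rangle)$, so it extends to $g \in {\rm O}^+(L)$ with Salem factor in $\chi_g$.

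The substantive step is to find a chamber $\sC \subset \sP_L$ preserved by $g$. Pick $h_1$ in the interior of $\sD_1 \cap L_1$, and pick $h_2 \in L_1^\perp$: in case (2) generic in the interior of a Weyl chamber of $W(L_1^\perp)$, in case (1) take $h_2 := v$. For a sufficiently small irrational $\epsilon > 0$, the point $h := h_1 + \epsilon h_2$ lies in $\sP_L$ (since $h_1^2 > 0$ dominates $\epsilon^2 h_2^2 < 0$), and the irrationality of $\epsilon$ against the rationality of the pairings $(h_1, x_1)$ and $(h_2, y_2)$ forces $(h, r) = (h_1, x_1) + \epsilon(h_2, y_2) \neq 0$ for every root $r = x_1 + y_2 \in L$ (with $x_1 \in L_1 \otimes \Q$ and $y_2 \in L_1^\perp \otimes \Q$); hence $h$ lies in the interior of some chamber $\sC$. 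The goal is to show $(g(h), r) = (f_1^N(h_1), x_1) + \epsilon(h_2, y_2)$ has the same sign as $(h, r)$ for every such $r$.

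The sign check splits by root type. For $r \in L_1$ (so $y_2 = 0$, and $r$ is an $L_1$-root by primitivity of $L_1$ in $L$), the sign is controlled by $\sD_1$ and is common to $h_1$ and $f_1^N(h_1)$. For $r \in L_1^\perp$ (so $x_1 = 0$), $g$ fixes $r$ pointwise and the two pairings coincide. For mixed roots ($x_1, y_2$ both nonzero), the relation $x_1^2 + y_2^2 = -2$ together with the hypothesis forces $x_1^2 \geq 0$: in case (2) automatically, since $L_1^\perp$ negative definite gives $y_2^2 \leq -2$; in case (1), writing $y_2 = cv$ with $c \in \frac{1}{k}\Z \setminus \{0\}$ (hence $|c| \geq 1/k$, as $[L : L_1 \oplus \langle v\rangle] = k$ and $L_1$ is primitive), one has $x_1^2 = 2ac^2 - 2 \geq 2a/k^2 - 2 \geq 0$. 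Therefore $x_1 \in \pm\overline{\sP_{L_1}}$, and the reversed Cauchy--Schwarz inequality in the closed time-like cone gives $|(h_1, x_1)| \geq \sqrt{h_1^2 x_1^2}$ (with $(h_1, x_1) \neq 0$ since $x_1 \neq 0$ and $h_1$ lies in the interior of the cone); when $x_1^2 = 0$ the integrality $(h_1, x_1) = (h_1, r) \in \Z$ yields the sharper bound $|(h_1, x_1)| \geq 1$. The same bounds apply to $(f_1^N(h_1), x_1)$, and both share the sign determined by which of $\pm\overline{\sP_{L_1}}$ contains $x_1$. The Cauchy--Schwarz estimate $|(h_2, y_2)| \leq |h_2|\sqrt{-y_2^2} = |h_2|\sqrt{x_1^2+2}$ against the above lower bound on $|(h_1, x_1)|$ shows the ratio $|(h_1, x_1)|/|(h_2, y_2)|$ is bounded below uniformly over all mixed roots; hence for $\epsilon$ sufficiently small the $L_1$-contributions dominate, and the signs of $(h, r)$ and $(g(h), r)$ agree.

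The main obstacle is the sign-preservation step for the infinitely many mixed roots, where a uniform lower bound on $|(h_1, x_1)|$ is required. The hypothesis (either the splitting in (2) or the determinant bound in (1)) is used essentially to force $x_1^2 \geq 0$; without it $x_1$ could lie in the negative cone of $L_1$, and the signs of $(h_1, x_1)$ and $(f_1^N(h_1), x_1)$ could differ as these points range across $\sD_1$.
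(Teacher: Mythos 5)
Your proof is correct, and its skeleton coincides with the paper's: both arguments hinge on exactly the same numerical step, namely that for a mixed root $r=x_1+y_2$ the index relation $|\det(L_1)|\,|v^2|=k^2|\det(L)|$ together with $|\det(L)/\det(L_1)|\ge 2$ forces $y_2^2\le -2$ and hence $x_1^2\ge 0$ (and in case (2) this is automatic since $(L_1)_L^\perp$ is negative definite). The difference is what happens after that step. The paper stops there and invokes McMullen's gluing theorem \cite[Theorem 5.2]{Mc11a} as a black box to conclude that $f_1^N\oplus_\phi \id_N$ preserves a chamber; you instead inline a proof of that gluing statement, constructing an explicit invariant chamber as the one containing $h=h_1+\epsilon h_2$ and checking sign-preservation of $(\,\cdot\,,r)$ root by root, with the reverse Cauchy--Schwarz bound $|(h_1,x_1)|\ge\sqrt{h_1^2x_1^2}$ (plus the integrality bound $|(h_1,x_1)|\ge 1$ in the isotropic case $x_1^2=0$, and the lower bound $x_1^2\ge 2/k^2$ coming from $kx_1\in L_1$) giving the uniform domination over $\epsilon|(h_2,y_2)|\le \epsilon|h_2|\sqrt{x_1^2+2}$. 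What your route buys is self-containedness and a transparent view of exactly where the hypothesis enters (without $x_1^2\ge 0$ the sign of $(h_1,x_1)$ is not controlled by $\sD_1$); what the paper's route buys is brevity. I see no gap: the three root types ($r\in L_1$ by primitivity, $r\in (L_1)_L^\perp$ fixed by $g$, and mixed roots) exhaust all roots of $L$, and the single irrational $\epsilon$ works uniformly.
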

\begin{proof}
Suppose (1) is true. Let $N:=(L_1)_L^\perp$. By assumption, there exists $f\in {\rm Aut}(\sC_1)$ of positive entropy, where $\sC_1$ is a chamber of $\sP_{L_1}$. Since $G(L_1)$ is a finite group, replacing $f$ by its sufficiently large power if necessary, we may assume that $\overline{f}| G(L_1)={\rm id}_{G(L_1)}$. By ${\rm rk}(N)=1$, we have $N=\langle v\rangle$ with $v^2=-a$ for some positive even integer $a$. Moreover, $L$ can be identified with $L_1\oplus_{\phi} N$ for some gluing map $\phi: H_1\longrightarrow H_2$, where $H_1\subset G(L_1)$, $H_2\subset G(N)$ (cf. \cite[Section 4]{OY20}). Let $k=|H_1|=|H_2|$. Then we have 
$$L=\{(\frac{x}{k},\frac{mv}{k}) |\; m\in \mathbb{Z}, x\in L_1, \overline{\frac{x}{k}}\in H_1 ,\overline{\frac{mv}{k}}\in H_2, \phi (\overline{\frac{x}{k}})=\overline{\frac{mv}{k}}\}.$$
Suppose $(\frac{x}{k}, \frac{mv}{k})\in L$ is a root which is not in $L_1\oplus N$. Note that 
$$-2\ge -|\frac{{\rm det}(L)}{{\rm det}(L_1)}|=-|\frac{{\rm det}(L_1)a}{{\rm det}(L_1)k^2}|=-\frac{a}{k^2}\ge -\frac{m^2 a}{k^2}={(\frac{mv}{k})}^2.$$ 
Then by \cite[Theorem 5.2]{Mc11a}, $$f \oplus_\phi {\rm id}_N\in {\rm O}^+(L_1\oplus_\phi N)={\rm O}^+(L)$$ preserves a chamber.

Case (2) is a direct consequence of \cite[Theorem 5.2]{Mc11a}.\end{proof}

 Now we describe the sublattice test. Let $\sZ^{m+1}$ be the finite list of all zero entropy even hyperbolic lattices of rank $m+1$. For each trial, we (randomly) choose a list of $m+1$ elements in $L$. If the sublattice $L_1$ generated by the $m+1$ elements is a primitive sublattice of $L$ such that ${\rm rk}(L_1)=m+1$, $|\frac{{\rm det}(L)}{{\rm det}(L_1)}|\ge 2$, and $L_1\notin \sZ^{m+1}$, then output $L$ is of positive entropy and stop. If such $L_1$ is not found after a reasonable finite number of trials, then output unclear and stop. 

\subsection*{Genus test} It is known that $L=U\oplus M$ is of positive entropy if the genus of $M$ contains at least two classes with no root sublattices of finite index. For relatively small rank/determinant, we may use Magma \cite{BCP} to compute all classes quickly. The issue is how to compute the classes when the rank/determinant is relatively big (see \cite{Me21} for another approach). Our idea is to collect (some of) the classes via random search, which turns out to be effective at least during our classification in Section 5. We (randomly) choose many $v\in M$. For each $v$, we pick an element $x_v\in U$ with $x_v^2=-v^2$. Let $e_v:=x_v+v\in L$. If there is $y_v\in L$ with $(e_v, y_v)=1$, then we compute $N_v:= \langle e_v, y_v \rangle^{\perp}_{L}$. If we find $v_i$ ($i=1, 2$) with both $N_{v_1}\ncong N_{v_2}$ having no root sublattices of finite index, then output that $L$ is of positive entropy and stop. Otherwise, output unclear and stop.  Note that the mass formula (\cite{CS88}) can be used to confirm if all classes have been found.

\subsection*{Surjectivity test} If $M$ has no root sublattices of finite index and the natural homomorphism ${\rm O}(M)\longrightarrow {\rm O}(q_M)$ is not surjective, then $L=U\oplus M$ is of positive entropy (\cite[Theorem 3.12]{Me21}). For many $M$ in our study,  we use Magma to compute ${\rm O}(M)$ (generators, order, etc.) and then we can check the surjectivity.

\subsection*{Covering radius test}  The {\it covering radius} of a positive definite lattice $N$ is the smallest positive real number $R$ for which spheres of radius $R$ centered at the elements in $N$ cover all the space $N\otimes \R$ (cf. \cite{CS99}). The covering radius of the Leech lattice $\Lambda $ is $\sqrt{2}$, which plays a key role in Conway's characterization of the automorphism group of $U\oplus \Lambda$ based on Vinberg's algorithm (\cite{Vi75}). A slight variant of the proof shows the following

\begin{theorem}[cf. {\cite{Co83}}, {\cite{Bo90}}]\label{thm:cradius}
Let $M$ be an even negative definite lattice and $L:=U\oplus M$. Let $e, e^\prime$ be a basis of $U$ with $e^2={e^\prime}^2=0, (e,e^\prime)=1$.  Suppose the covering radius of $M(-1)$ is at most $\sqrt{2}$. Then there is a chamber  $\sC\subset \sP_L$ such that $e\in L\cap (\overline{\sC}\setminus \{0\})$ and $(c,e)\leq 1$ for all $\sC$-irreducible roots $c\in L$. In particular, $L$ is of zero entropy.\end{theorem}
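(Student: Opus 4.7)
The plan is to adapt Conway's classical construction (as extended by Borcherds) for $U \oplus \Lambda$, where $\Lambda$ is the Leech lattice, to our setting. The argument has three main steps: parameterize all roots of $L$, exhibit a chamber $\sC$ with $e$ in its closure, and use the covering-radius hypothesis to bound $(c, e)$ on its walls.

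First, I would parameterize roots: every $r \in L$ is uniquely $r = \alpha e + \beta e^\prime + m$ with $m \in M$ and $2\alpha\beta + m^2 = -2$, so $(r, e) = \beta$. Roots with $\beta = 1$ are in bijection with $M$ via $r_m := \tfrac{-2-m^2}{2}\,e + e^\prime + m$; roots with $\beta = 0$ have the form $\alpha e + m$ for $m$ a root of $M$ and $\alpha \in \Z$. Next, I would construct $\sC$: pick a Weyl chamber $\sD_M$ of the root-hyperplane arrangement of $M$ (empty if $M$ has no roots), let $\Pi$ be its simple roots, choose $\rho$ in the interior of $\sD_M$, and set $h_0 := e + \varepsilon(e^\prime + \rho)$ for $\varepsilon > 0$ small. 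A direct calculation shows $h_0 \in \sP_L$ with $(h_0, r_m) > 0$ for all $m \in M$ and $(h_0, \pi) > 0$ for all $\pi \in \Pi$; let $\sC$ be the chamber of $\sP_L$ containing $h_0$. Then $e \in \overline{\sC}$ by continuity.

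The heart of the proof is showing that every $\sC$-irreducible root $c$ satisfies $(c, e) \leq 1$. Suppose for contradiction that $c$ is $\sC$-irreducible with $\beta := (c, e) \geq 2$. Applying the covering-radius hypothesis to $m/\beta \in M \otimes \R$ produces $m^\prime \in M$ with $(m - \beta m^\prime, m - \beta m^\prime)_M \geq -2\beta^2$, and a direct computation using $2\alpha\beta + m^2 = -2$ gives
\[
(c - r_{m^\prime})^2 \;=\; -4 + 2\beta + \tfrac{2 + (m - \beta m^\prime, m - \beta m^\prime)_M}{\beta} \;\geq\; \tfrac{2}{\beta} - 4 \;>\; -4.
\]
Since $L$ is even, $(c - r_{m^\prime})^2 \in \{-2, 0, 2, \ldots\}$; meanwhile $(c - r_{m^\prime}, e) = \beta - 1 \geq 1$, and $(c - r_{m^\prime}, h_0) > 0$ for $\varepsilon$ small. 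Iterating (with induction on $\beta$, the base $\beta \leq 1$ handled by root-system theory on $M$) expresses $c$ as a nonnegative integer combination of $\{r_{m^{\prime\prime}} : m^{\prime\prime} \in M\} \cup \Pi$, contradicting $\sC$-irreducibility. The main obstacle is the subcase $(c - r_{m^\prime})^2 > -2$: then $c - r_{m^\prime}$ is not itself a root but a positive-norm element of the closed positive cone, and decomposing such an element into simple roots of $\sC$ requires a second iteration of the covering-radius argument.

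Finally, zero entropy follows from $(c, e) \leq 1$. One first verifies that $e$ is the unique primitive isotropic vector in $L \cap \overline{\sC}$: any candidate $e_1 = ae + be^\prime + n$ with $b \geq 1$ must satisfy $(e_1, r_m) \geq 0$ for every $m \in M$, and specializing to $m = 0$ and $m = n$ together with $e_1^2 = 0$ yields a contradiction. Consequently, for any $f \in \Aut(\sC)$, the image $f(e) \in L \cap \overline{\sC}$ is a positive integer multiple of $e$, so $f(e) = e$. Hence $f$ acts on the negative-definite lattice $e^\perp/\langle e \rangle$ with finite image, and a suitable power of $f$ is parabolic with spectral radius $1$; thus every $f \in \Aut(\sC)$ has zero entropy, and so does $L$.
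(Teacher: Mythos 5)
The paper itself does not write out a proof of this theorem: it cites Conway and Borcherds and says a slight variant of their argument gives it, so you should be measured against the classical covering-radius argument. Your root parameterization, the chamber through $h_0=e+\varepsilon(e'+\rho)$, and the central computation $(c-r_{m'})^2=-4+2\beta+\frac{2+(m-\beta m',\,m-\beta m')_M}{\beta}\ge\frac{2}{\beta}-4>-4$, hence $(c-r_{m'})^2\ge-2$, i.e.\ $(c,r_{m'})\le-1$, are exactly right and are the heart of Conway's proof. But the step from $(c,r_{m'})\le-1$ to ``$c$ is not $\sC$-irreducible'' does not close as you have set it up. You propose to write $c=r_{m'}+(c-r_{m'})$ and decompose the second summand into simple roots, but when $(c-r_{m'})^2\ge0$ that vector is isotropic or of positive norm and is \emph{never} a nonnegative combination of roots, so no ``second iteration of the covering-radius argument'' can produce the decomposition you want; you correctly flag this as the main obstacle but leave it unresolved. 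The standard completion avoids decomposing $c-r_{m'}$ at all: $r_{m'}$ is a positive root, hence $r_{m'}=\sum a_ic_i$ with $a_i\in\Z_{\ge0}$ and $c_i$ $\sC$-irreducible; since distinct $\sC$-irreducible roots pair $\ge0$ in this sign convention, $(c,r_{m'})<0$ forces $c=c_j$ for some $j$ with $a_j\ge1$, and then $1=(r_{m'},e)=\sum a_i(c_i,e)\ge a_j(c,e)\ge\beta\ge2$ (using $(c_i,e)\ge0$ because $e\in\overline{\sC}$), a contradiction.

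The second gap is in the ``in particular'' part. Your claim that $e$ is the unique primitive isotropic vector in $L\cap\overline{\sC}$ is both unproved and, under the stated hypothesis, false. Your verification (evaluating $(e_1,r_m)\ge0$ at $m=0$ and $m=n$) only produces a contradiction when $b=(e_1,e)=1$: for $e_1=ae+2e'+n$ the two inequalities read $a\ge2$ and $a-2\ge0$, which are consistent. In fact, completing the square shows $(e_1,r_m)=\frac{b}{2}\bigl(\|m-n/b\|^2_{M(-1)}-2\bigr)$, so $(e_1,r_m)\ge0$ for all $m$ exactly when $n/b$ lies at distance $\ge\sqrt2$ from every point of $M(-1)$; when the covering radius equals $\sqrt2$ (which the hypothesis permits and which occurs for lattices in the paper, e.g.\ those with deep holes of radius exactly $\sqrt2$) such points exist and give further primitive isotropic vectors in $\overline{\sC}$ --- this is precisely the deep-hole/cusp correspondence from the Leech lattice case. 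What you should use instead is the property the first part of the theorem establishes: $e$ is the unique primitive isotropic $v\in L\cap\overline{\sC}$ with $(c,v)\le1$ for \emph{all} $\sC$-irreducible $c$ (a deep-hole vector pairs arbitrarily large with the $r_m$, infinitely many of which are $\sC$-irreducible because only finitely many irreducible roots have height zero). Since $\Aut(\sC)$ permutes the irreducible roots, this Weyl-vector property is $\Aut(\sC)$-invariant, so $\Aut(\sC)$ fixes $e$; your final step (finite order on the negative definite quotient $e^\perp/\langle e\rangle$, hence spectral radius $1$) is then fine.
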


For many $L=U\oplus M$ listed in the Appendix, we can compute the covering radius of $M(-1)$ by Magma to confirm that $L$ is of zero entropy. 

\subsection*{Shimada's algorithm} In \cite{Sh15} Shimada presented an algorithm which is applicable to compute the automorphism groups of a wide class of K3 surfaces by generalizing Borcherds' method (\cite{Bo87}, \cite{Bo98}). We briefly explain how we implement Shimada's algorithm. Firstly, we find a primitive embedding $L\hookrightarrow U\oplus E_8^m$, where $m=1,2$. Consider the subgroup $$G:=\{g\; |\; f\in W(U\oplus E_8^m),\, f(L)=L,\, g=f|L\}\subset {\rm O}^+(L)$$ which satisfies $[{\rm O}^+(L): G]<+ \infty$ and the required properties to proceed \cite[Algorithm 6.1]{Sh15}. Then we run the algorithm to compute a finite set ${g_1,...,g_k}$ of generators of the group $H:=\{g\in G |\; g (\sC)=\sC \}$, where $\sC\subset \sP_L$ is a chamber with respect to the Weyl group $W(L)$ action on $\sP_L$. Finally, we compute the sublattice $L_1:=\{x\in L |\; g_i(x)=x \text{ for all }i\}\subset L$. If $L_1$ is negative definite, then $L$ is of positive entropy; otherwise, $L$ is of zero entropy. 

\begin{remark}\label{rem:tests}
(i) Shimada's algorithm is powerful since it gives a definite answer for entropy-positivity of lattices if the calculation can be finished. The cost to pay is that for lattices of relatively big determinant, the computation involved might be too heavy (see \cite[Remark 10.3]{Sh15}). In general, such lattices are more likely of positive entropy. Hence, in practice, to check entropy-positivity of many candidates of the form $L=U\oplus M$, we try the first four tests above before applying Shimada's algorithm, and they frequently pick out all the candidates of positive entropy quickly (see Remark \ref{rem:exception}). 

(ii) Note that the sublattice test and Shimada's algorithm are applicable to even hyperbolic lattices not necessarily containing $U$.
\end{remark}


\section{Proof of Theorem \ref{main1}}\label{sec:proof}
\noindent

In this section, we prove Theorem \ref{main1}. The Picard number $\rho=\rho(X)$ of any complex projective K3 surface $X$ is at most $20$. It is known that $\sI^{\rho}$ is empty if $\rho =20, 19$ (\cite[Theorem 1.6]{Og07}, \cite[Theorem 0.3]{Me21}). If $\rho\le 2$, then $\sI^{\rho}$ is empty (see e.g. \cite[Chapter 15, Example 2.11]{Hu16}). So we only need to determine $\sI^\rho$, $3\le \rho\le 18$. Since ${\rm NS}(X)$ is a primitive sublattice of the unimodular rank $22$ lattice $U^3\oplus E_8^2$, it follows that $\rho(X)+l({\rm NS}(X))\le 22$. Thus, it suffices to classify all lattices $L$ satisfying the following conditions:
\begin{enumerate}
\item $L$ is an even hyperbolic non-$2$-reflective lattice;

\item $L$ is of zero entropy;

\item $3\le {\rm rk}(L)\le 18$ and ${\rm rk}(L)+l(L)\le 22$.
\end{enumerate}
 We classify such lattices inductively from rank $3$ to $18$. For each rank $n$, by Lemma \ref{lem:overlattice} and Theorem \ref{thm:S}, we can obtain all lattices satisfying (1)-(3) in two steps: 
\begin{enumerate}
\item[(I)]:  classify rank $n$ lattices $L$ satisfying (1)-(3) of the form $L=U\oplus M$;

\item[(II)]: compute the criticral sublattices $L_{cr}$ and their overlattices inside $ L$. 
\end{enumerate}
 If $L=U\oplus M$ satisfies (1)-(3) and $M$ has no roots, then  $M$ is isometric to one of the $32$ lattices in \cite[Theorem 5.12]{Me21}. It is known that two even lattices $M_i$ ($i=1,2$) have the same genus if and only if $U\oplus M_1$ is isometric to $U\oplus M_2$. Thus, in Step (I), we are reduced to consider genera of even negative definite lattices $M$ belonging to one of the following two types: 
 \begin{enumerate}
\item[(i)] $M$ is an overlattice of a root lattice;
 
\item[(ii)] The genus of $M$ contains only $M$, $M$ has roots, and $M$ has no root sublattice of finite index.
\end{enumerate}

An explicit finite list of all negative definite primitive one-class genera in rank $\ge 3$ is known by a result of Watson, recently corrected by Lorch-Kirschmer \cite{LK13} (for rank $2$, a finite list is also known but a proof for its completeness without assuming the Generalized Riemann Hypothesis seems unknown, cf. \cite{Vo07}). The largest rank in Watson's list is $10$. For ${\rm rk}(M)\ge 3$,  all type (ii) lattices can be derived from Watson's list as follows: firstly, for odd lattices $M^\prime$ in the list, we replace them by $M^\prime(2)$; secondly, we collect all lattices in this modified list which have roots but contain no root sublattice of finite index. It is known $\sI^3$ consists of $18$ lattices (\cite[Section 3.3]{Ni99}, see also Example \ref{exm:rk3}). For computation in cases $4\le n\le 18$, we use McMullen's package \cite{Mc11b} and a mixture of Mathematica (\cite{Wo}), Magma (\cite{BCP}), PARI/GP (\cite{Th}), and SageMath (\cite{The}). Next we explain two typical cases $n=4,12$. 

\subsection*{Case $n=4$} There are two rank $2$ root lattices $A_1^2$, $A_2$ and they do not have nontrivial even overlattices. The lattices $U\oplus A_1^2$, $U\oplus A_2$ are $2$-reflective. Thus, there is no desired lattice $L$ from $M$ of type (i). For type (ii) lattices, we first need to obtain a finite list of candidates. Let $L=U\oplus M$ and let $e_1, e_2$ (resp $\alpha, \beta$) be a basis of $U$ (resp. $M$)  with Gram matrix $\small{\left(\begin{array}{cc} 
0 & 1 \\
1 & 0 
\end{array} \right)}$ (resp.  case (1): $\small{\left(\begin{array}{cc} 
-2 & 0 \\
0 & -2a 
\end{array} \right)}$ or case (2): $\small{\left(\begin{array}{cc} 
-2 & 1 \\
1 & -2a 
\end{array} \right)}$), where $a\ge 2$. 

Case (1): By the sublattice test, we may assume $a=2,3,4,5,7,9,13,25$. So we get $8$ candidates.

Case (2): Let $L_1:=\langle e_1, \alpha, 11 e_2-\beta  \rangle.$ Clearly $L_1$ is a primitive rank $3$ sublattice of $L$ and ${\rm det}(L_1)=242$. Note that there is no rank $3$ lattices in $\sF\cup \sI^3$ of determinant $242$. Thus, $L_1$ is of positive entropy. Therefore, if $|{\rm det}(L)|=4a-1\ge 484,$ then $L$ is of positive entropy by the sublattice test. Hence, we may assume $2\le a\le 121$. So we get $120$ candidates.

 Totally, we have $128$ candidates from cases (1) and (2). After applying the first four tests (overlattice test, sublattice test, genus test, surjectivity test) in Section \ref{sec:tests}, $120$ of them are ruled out and the $8$ candidates $M$ remaining are as follows:

$$\small{\left(\begin{array}{cc} 
-2 & 1 \\
1 & -4 
\end{array} \right)}
, 
\small{\left(\begin{array}{cc} 
-2 & 0 \\
0 & -4 
\end{array} \right)}
, 
\small{\left(\begin{array}{cc} 
-2 & 1 \\
1 & -6 
\end{array} \right)}
, \small{\left(\begin{array}{cc} 
-2 & 0 \\
0 & -6 
\end{array} \right)}
, \small{\left(\begin{array}{cc} 
-2 & 0 \\
0 & -8
\end{array} \right)}
, \small{\left(\begin{array}{cc} 
-2 & 1 \\
1 & -10 
\end{array} \right)}
,$$ 
$$\small{\left(\begin{array}{cc} 
-2 & 1 \\
1 & -14 
\end{array} \right)}
, \small{\left(\begin{array}{cc} 
-2 & 1 \\
1 & -22 
\end{array} \right)}.
$$ 

The covering radii of the first four (resp. the last four) $M(-1)$ are at most (resp. greater than) $\sqrt{2}$. Thus, the first four candidates $L=U\oplus M$ are of zero entropy for sure (Theorem \ref{thm:cradius}). Then by applying Shimada's algorithm to the $4$ candidates remaining, only the last one is of positive entropy. Therefore, by adding the $10$ rank two lattices in \cite[Theorem 5.12]{Me21},  we obtain exactly $17$ desired lattices in step (I). Similar to Example \ref{exm:rk3}, in step (II), we obtain $7$ desired lattices by computing the critical sublattices and their overlattices based on Theorem \ref{thm:SinL}. Therefore, there are exactly $24$ rank four lattices satisfying (1)-(3).

\subsection*{Case $n=12$} It is known that $U\oplus A_1^6\oplus D_4$ is the only $2$-elementary rank $12$ lattices satisfying (1)-(3) (\cite{Ni81}, \cite{Ni20}, see also \cite{OY19}, Remark \ref{rem:2-elem}). Next we only need to treat non-2-elementary lattices in step (I). After applying the first four tests in Section \ref{sec:tests} to the genera of non-$2$-elementary lattices $M$ of type (i), we get $4$ candidates remaining:  $$U\oplus A_1\oplus D_9, U\oplus A_2\oplus D_8, U\oplus A_3\oplus D_7, U\oplus A_2^2\oplus E_6.$$ By Shimada's algorithm, all of them are of zero entropy. From Watson's list, we get only $1$ candidate $L=U\oplus M$ with $M$ of type (ii): $ U\oplus E_8(3)\oplus A_2,$ which is ruled out by Lemma \ref{lem:existoverlattice} and Theorem \ref{thm:overlattice} since the determinants of the $4$ rank $12$ lattices in $\sF$ are $3,4,16,64$. So we totally obtain $5$ desired lattices in step (I). In step (II), by Theorem \ref{thm:SinL} (1), the critical sublattices $L_{cr}=L$ for the $5$ lattices $L$. Thus, there are exactly $5$ rank twelve lattices satisfying (1)-(3).

The other cases (i.e., $n=5,6,7,8,9,10,11,13,14,15,16,17,18$) can be done similarly. Note that for $n\ge 13$, many candidates can be ruled out by Theorem \ref{thm:atleast13}. Totally, there are exactly $193$ lattices, listed in the Appendix, satisfying the conditions (1)-(3). All of them have primitive embeddings into the K3 lattice $U^3\oplus E_8^2$ by \cite[Corollary 1.12.3]{Ni80}. This completes the proof of Theorem \ref{main1}.

\begin{remark}\label{rem:exception}
Among all the non-2-elementary candidates $L=U\oplus M$ with $M$ of type either (i) or (ii) above, almost all the lattices $L$ which are of positive entropy can be ruled out by applying the first four tests in Section 4. In fact, it turns out that the only exception for which all of them fail is the lattice $U\oplus \small{\left(\begin{array}{cc} 
-2 & 1 \\
1 & -22 
\end{array} \right)}$. It would be interesting if there is an explanation.
\end{remark}

\appendix

\section{The $193$ lattices}

 In this appendix, we list the $193$ lattices isometric to the N\'eron-Severi lattices of the projective K3 surfaces of zero entropy with infinite automorphism groups.  For a symmetric matrix $(a_{ij})$, we use its lower left entries $[a_{11},a_{21},a_{22},...,a_{nn}]$ to denote the lattice with Gram matrix equal to it. For example, by $[0,2,2,0,1,-14]$ we mean the rank $3$ lattice with Gram matrix $\small{\left(\begin{array}{ccc} 
0 & 2 & 0\\
2& 2 & 1\\
0& 1& -14
\end{array} \right)}$.  
The rank of the $193$ lattices is denoted by $n$, $3\le n\le 18$. The list is as follows (Gram matrices, determinants and glue groups of these lattices can be found in the ancillary file).

\subsection*{$n=3$ ($18$ lattices):} $${\displaystyle U\oplus A_1(2)},\,\,\, {\displaystyle U\oplus A_1(3)},\,\, {\displaystyle U\oplus A_1(4)},\,\, {\displaystyle U\oplus A_1(5)},\,\,\,{\displaystyle U\oplus A_1(7)},\,\,\, {\displaystyle [2]\oplus A_1\oplus A_1(2)}, \,\,\,{\displaystyle U\oplus A_1(9)},$$ $${\displaystyle [8]\oplus A_2}, \,{\displaystyle U\oplus A_1(13)},\, {\displaystyle [2]\oplus A_1\oplus A_1(4)},\, {\displaystyle U\oplus A_1(25)},\, {\displaystyle [18]\oplus A_2},\, {\displaystyle [0,2,2,0,1,-14]},\,\,{\displaystyle [16]\oplus A_1^2},$$ $${\displaystyle [8]\oplus A_1\oplus A_1(4)},\,\, {\displaystyle [0,3,4]\oplus A_1(9)},\,\, {\displaystyle [72]\oplus A_2},\,\, {\displaystyle U(16)\oplus A_1}.$$

\subsection*{$n=4$ ($24$ lattices):} $$ U\oplus [-2,1,-4], U\oplus A_1\oplus A_1(2), U\oplus [-2,1,-6], U\oplus A_1\oplus A_1(3), U\oplus A_2(2), U\oplus [-4,1,-4],$$ $${\displaystyle  U\oplus A_1\oplus A_1(4)},\, {\displaystyle U\oplus A_1^2(2),}\, {\displaystyle U\oplus [-2,1,-10]},\,{\displaystyle U\oplus [-6,2,-4]},\,{\displaystyle U\oplus A_1(2)\oplus A_1(3)},\,{\displaystyle U\oplus [-2},$$ $$1,-14], {\displaystyle U\oplus A_2(3)}, {\displaystyle [8]\oplus A_3},{\displaystyle U\oplus [-6,1,-6]}, {\displaystyle U\oplus [-10,2,-4]}, {\displaystyle U\oplus A_1(5)\oplus A_1(2)},  {\displaystyle U(4)\oplus A_2},$$ $${\displaystyle  [2]\oplus A_1\oplus A_2(2)},\,{\displaystyle  [2]\oplus A_1\oplus A_1^2(2)},\, {\displaystyle  U\oplus [-14,3,-6],}\, {\displaystyle [20]\oplus A_3,\, [0,3,4]\oplus A_2(3),\, U(8)\oplus A_1^2.}$$

\subsection*{$n=5$ ($27$ lattices):} $${\displaystyle [2]\oplus A_4},\,\, {\displaystyle U\oplus [-4]\oplus A_2},\,\,  U\oplus [-2,0,-2,1,-1,-4],\,\, {\displaystyle U\oplus A_1\oplus [-2,1,-4]},\,\, U\oplus [-2,-1,-2,1,$$ $$1,-6],\,\,{\displaystyle U\oplus A_1^2\oplus [-4]},\,\, {\displaystyle [4]\oplus A_4},\,\,{\displaystyle [2,1,-2]\oplus A_3},\,\,{\displaystyle U\oplus [-2,-1,-4,-1,0,-4]},\,\,{\displaystyle U\oplus A_2\oplus [-8]},$$ $${\displaystyle U\oplus A_1\oplus A_2(2)},\,\, {\displaystyle U\oplus A_1^2\oplus A_1(3)},\,\,{\displaystyle U\oplus [-2,-1,-4,0,-1,-4]},\,\, {\displaystyle U\oplus A_1\oplus A_1^2(2)},\,\,{\displaystyle U\oplus A_3(2)},$$ $${\displaystyle [2,1,-4]\oplus A_3},\,{\displaystyle [4,1,-2]\oplus A_3},\, {\displaystyle U\oplus [-4,1,-4,2,1,-4]},\, {\displaystyle U\oplus A_1(2)\oplus [-2,-1,-6]},\, {\displaystyle [10]\oplus A_4},$$ $${\displaystyle U\oplus [-4,1,-4,1,-1,-4]},\,\, {\displaystyle [14]\oplus D_4},\,\, {\displaystyle U(4)\oplus A_3},\,\, {\displaystyle U\oplus [-6]\oplus A_2(2)},\,\,{\displaystyle [0,2,2,0,1,-2]\oplus A_2(2)},$$ $${\displaystyle U(5)\oplus A_3},\,\, {\displaystyle [2]\oplus A_1\oplus A_3(2)}.$$

\subsection*{$n=6$ ($28$ lattices):} $${\displaystyle [2]\oplus A_5},\,\, {\displaystyle U\oplus [-2,-1,-2,-1,0,-2,0,-1,0,-4]},\,\,{\displaystyle  U\oplus A_3\oplus [-4]},\,\,{\displaystyle U\oplus [-2,-1,-2,0,0, -2},$$ $$-1,0,-1,-4],\,\, {\displaystyle U(2)\oplus A_4},\,\,{\displaystyle  [2,0,-2]\oplus A_4},\,\,{\displaystyle [2,1,-2]\oplus D_4},\,\, {\displaystyle U\oplus A_2\oplus [-2,1,-4]}, \,\,U\oplus A_1 \oplus$$ $$[-2,0,-2,1,1,-4],\,\, {\displaystyle [2,1,-2]\oplus A_4},\,\,{\displaystyle  U\oplus [-2,-1,-2,-1,0,-4,0,-1,-1,-4]},\,\, U\oplus [-2,-1,$$ $$-2,1,1,-4,1,0,-1,-4],\,\,{\displaystyle  [-2, 0, -8, 1, 4, -2]\oplus A_3},\,\, {\displaystyle U\oplus A_1^3\oplus A_1(2)},\,\, U\oplus [-2,0, -2,-1,-1,$$ $$-4,-1,0,-2,-4],\,\,{\displaystyle  U(3)\oplus D_4},\,\, {\displaystyle U(2)\oplus A_2^2},\,\,{\displaystyle  [2,1,-4]\oplus D_4},\,\, {\displaystyle [4,1,-2]\oplus A_4},\,\,{\displaystyle  [4]\oplus A_2\oplus A_3,}$$ $${\displaystyle U\oplus A_1^2(2)\oplus A_2},\,\,{\displaystyle  [10]\oplus A_5},\,\, {\displaystyle U\oplus A_1\oplus A_3(2)},\,\,{\displaystyle  U\oplus D_4(2)}, \,\, {\displaystyle [4,1,-2]\oplus A_2^2},\,\, {\displaystyle U(3)\oplus A_1^2\oplus A_2},$$ $${\displaystyle  U(5)\oplus A_4},\,\,{\displaystyle  [2]\oplus A_1\oplus D_4(2).}$$

\subsection*{$n=7$ ($21$ lattices):} $$[4]\oplus E_6,\,\, [2]\oplus A_6,\,\, U\oplus [-2,-1,-2,-1,-1,-2,1,1,1,-2,1,1,1,-1,-4], U\oplus A_1(2)\oplus D_4,$$ $$[2,1,-2]\oplus D_5,\,\, U\oplus [-2,0,-2,0,0,-2,-1,-1,0,-2,0,-1,-1,0,-4],\,\, U\oplus A_1^3\oplus A_2,\,\, U(2)\oplus A_5,$$ $$U\oplus [-2,-1,-2,0,0,-2,0,0,-1,-2,1,1,1,0,-4],\,\, [4]\oplus A_6,\,\, [10]\oplus E_6,\,\, [-2, 0, -8, 1, 4,-2]\oplus$$ $$D_4,\,\,\, [8]\oplus D_6,\,\,\, U(3)\oplus D_5,\,\,\,[-2, 0, -8, 1, 4, -2]\oplus A_4,\,\, \,U(2)\oplus A_2\oplus A_3,\,\,\, [18]\oplus E_6,\,\,\, [4]\oplus A_3^2,$$ $$[16]\oplus D_6,\,\, U\oplus A_1\oplus D_4(2),\,\, U(3)\oplus A_1\oplus A_2^2.$$

\subsection*{$n=8$ ($19$ lattices):} $${\displaystyle U\oplus [-2,-1,-2,-1,-1,-2,-1,0,0,-2,-1,-1,0,-1,-2,-1, 0,-1,0,0,-4]},\,\,{\displaystyle U(2)\oplus E_6},$$ $${\displaystyle [2,1,-2]\oplus E_6},\,\, {\displaystyle U\oplus [-2,1,-2,-1,1,-2,-1,1,-1,-2,1,-1,1,1,-2,1, 0,1,1,-1, -4]}, \,\, U\oplus$$ $$A_1^2\oplus A_4, \,\,{\displaystyle [10]\oplus E_7},\,\, {\displaystyle U\oplus [-2,0,-2,0,-1,-2,0,-1,0,-2,0,1,0,1,-2,-1,1,1, 1,-1,-4]},$$ $${\displaystyle U\oplus A_1\oplus A_2\oplus A_3},\,\, {\displaystyle [4,1,-2]\oplus E_6},\,\, {\displaystyle U(2)\oplus A_6},\,\, {\displaystyle U\oplus A_1^3\oplus A_3},\,\, {\displaystyle [4]\oplus A_7},\,\,{\displaystyle  [16]\oplus E_7}, U\oplus A_1^2\oplus$$ $$A_2^2,\, {\displaystyle  U\oplus [-2,1,-2,0,0,-2,0,0,1,-2,-1,1,-1,1,-4,-1,0,-1,0,-2,-4]},\, {\displaystyle U(2)\oplus A_2\oplus D_4},$$ $${\displaystyle  U(2)\oplus A_3^2},\,\,{\displaystyle  U(4)\oplus D_6},\,\,{\displaystyle U(3)\oplus A_2^3}.$$

\subsection*{$n=9$ ($15$ lattices):} $$U\oplus [-2,1,-2,1,-1,-2,1,-1,-1,-2,1,-1,0,-1,-2, 1,-1,0,0, -1,-2,1,-1, 0,-1,-1,$$ $$-1,-4],\,\, [10]\oplus E_8,\,\, U\oplus A_1\oplus A_6,\,\, U\oplus A_1^2\oplus D_5,\,\, [16]\oplus E_8,\,\, U\oplus A_2\oplus A_5,\,\, [4,1,-2]\oplus E_7,$$ $$U\oplus A_3\oplus A_4, U\oplus A_1^2\oplus A_5, [-2, 0, -8, 1, 4, -2]\oplus E_6, U\oplus A_1\oplus A_2\oplus A_4, U(4)\oplus E_7, U(2)\oplus A_7,$$ $$U\oplus A_1\oplus A_2^3,\,\, U(2)\oplus A_3\oplus D_4.$$

\subsection*{$n=10$ ($13$ lattices):} $$[2,1,-2]\oplus E_8,\, U\oplus A_1\oplus D_7,\, U\oplus A_8,\, [4,1,-2]\oplus E_8,\, U\oplus A_1^2\oplus E_6,\, U\oplus A_1\oplus A_7,\, U\oplus A_1(2)\oplus$$ $$D_7,\,\, U\oplus [-2,-1,-2,1,0,-2,1,0,-1,-2,-1,-1,1,1, -2,-1,-1,1,1,-1,-2,-1, 0,0,1,0,$$ $$0,-4,-1,0,0,1,0,0,0,-4],\,\, U(4)\oplus E_8,\,\,U\oplus A_4^2,\,\, U\oplus A_2^4, \,\, U\oplus E_8(2),\,\, U(2)\oplus A_1^8.$$

\subsection*{$n=11$ ($6$ lattices):} $$U\oplus D_9,\,\, U\oplus E_8 \oplus A_1(3),\,\, [-2, 0, -8, 1, 4, -2]\oplus E_8,\,\, U\oplus A_2\oplus D_7,\,\, U\oplus A_2\oplus A_1 \oplus E_6,$$ $$U\oplus A_1 \oplus E_8(2).$$

\subsection*{$n=12$ ($5$ lattices):} $$U\oplus A_1\oplus D_9,\,\, U\oplus A_2 \oplus D_8,\,\, U\oplus A_3 \oplus D_7,\,\, U\oplus A_2^2\oplus E_6,\,\, U\oplus A_1^6 \oplus D_4.$$

\subsection*{$n=13$ ($3$ lattices):} $$U\oplus A_2\oplus A_1 \oplus E_8,\,\, U\oplus D_4\oplus D_7,\,\, U\oplus A_1^3\oplus D_4^2.$$

\subsection*{$n=14$ ($5$ lattices):} $$U\oplus A_4 \oplus E_8,\,\, U\oplus A_1\oplus D_{11},\,\, U\oplus E_6^2,\,\, U\oplus D_4^3,\,\, U\oplus A_1^4\oplus D_8.$$

\subsection*{$n=15$ ($3$ lattices):} $$U\oplus D_{13},\,\, U\oplus A_5\oplus E_8,\,\, U\oplus A_1^3\oplus D_{10}.$$

\subsection*{$n=16$ ($2$ lattices):} $$U\oplus E_6\oplus E_8,\,\, U\oplus A_1^2\oplus D_{12}.$$

\subsection*{$n=17$ ($2$ lattices):} $$U\oplus D_{15},\,\, U\oplus A_1\oplus D_{14}.$$

\subsection*{$n=18$ ($2$ lattices):} $$U\oplus D_{16},\,\, U\oplus A_1 \oplus E_7\oplus E_8.$$

\end{document}